\def\BibTeX{{\rm B\kern-.05em{\sc i\kern-.025em b}\kern-.08em
    T\kern-.1667em\lower.7ex\hbox{E}\kern-.125emX}}
\theoremstyle{plain}
\newtheorem{theorem}{Theorem}
\newtheorem{lemma}[theorem]{Lemma}
\newtheorem{proposition}[theorem]{Proposition}
\newtheorem{corollary}[theorem]{Corollary}
\theoremstyle{definition}
\newtheorem{definition}[theorem]{Definition}
\newtheorem{remark}[theorem]{Remark}
\newtheorem{example}[theorem]{Example}
\newcommand{\cc}{^{\perp\perp}}
\renewcommand{\c}{^\perp}
\newcommand{\notperp}{\mathbin{\not\perp}}
\definecolor{darkgreen}{rgb}{0.0, 0.5, 0.0}
\newcommand{\lin}[1]{[#1]}
\newcommand{\herm}[2]{\left( #1 , #2 \right)}
\definecolor{brightgreen}{rgb}{0.4, 1.0, 0.0}
\newcolumntype{?}{!{\vrule width 1pt}}
\def \rev {\textcolor{black}}
\begin{document}

\title{Linear orthogonality spaces as a \\ new approach to quantum logic\\
}
\author{
\IEEEauthorblockN{Kadir~Emir}
\IEEEauthorblockA{\textit{Department of Mathematics and Statistics} \\
\textit{Faculty of Science, Masaryk University}\\
Brno, Czech Republic\\
emir@math.muni.cz}
\and
\IEEEauthorblockN{David~Kruml}
\IEEEauthorblockA{\textit{Department of Mathematics and Statistics} \\
\textit{Faculty of Science, Masaryk University}\\
Brno, Czech Republic\\
kruml@math.muni.cz}
\and
\IEEEauthorblockN{Jan~Paseka}
\IEEEauthorblockA{\textit{Department of Mathematics and Statistics} \\
\textit{Faculty of Science, Masaryk University}\\
Brno, Czech Republic\\
paseka@math.muni.cz}
\and
\IEEEauthorblockN{Thomas~Vetterlein}
\IEEEauthorblockA{\textit{Department of Knowledge-Based Mathematical Systems}\\
\textit{Johannes Kepler University Linz}\\
Linz, Austria\\
Thomas.Vetterlein@jku.at}
}

\maketitle

\begin{abstract}
The notion of an orthogonality space was recently rediscovered as an effective means to characterise the essential properties of quantum logic. The approach can be considered as minimalistic; solely the aspect of mutual exclusiveness is taken into account. In fact, an orthogonality space is simply a set endowed with a symmetric and irreflexive binary relation. If the rank is at least $4$ and if a certain combinatorial condition holds, these relational structures can be shown to give rise in a unique way to Hermitian spaces. In this paper, we focus on the finite case. In particular, we investigate orthogonality spaces of rank at most $3$.
\end{abstract}

\begin{IEEEkeywords}
Orthogonality spaces, undirected graphs, linear orthogonality spaces, finite rank
\end{IEEEkeywords}

\section{Introduction}

To grasp the essential properties of the basic model used in quantum physics, David Foulis and his collaborators coined in the 1970s the notion of an orthogonality space \cite{Dac,Wlc}. The idea was to reduce the involved structure of a complex Hilbert space to the minimum of what is really needed. An orthogonality space is a set endowed with a binary relation about which not more than symmetry and irreflexivity is assumed. The canonical example is the projective Hilbert space together with the orthogonality relation. The approach can be seen as an attempt to increase the level of abstraction in quantum logic to its limits: From the quantum-physical perspective, solely the aspect of distinguishability of measurement results is taken into account; from the logical perspective, solely the aspect of mutual exclusiveness is exploited.

Orthogonality spaces were recently rediscovered and they have proven as a basis of quantum logic in an amazingly effective way \cite{nos,Vet1,Vet2,Vet3}. In fact, each orthogonality space gives rise to a test space; see \cite{Wlc}. Test spaces can in turn be understood as an abstract way to model quantum-mechanical propositions. The latter are, in the standard approach, modelled by subspaces of Hilbert spaces. It has turned out that the transition from orthogonality spaces to inner-product spaces is possible on the basis of a remarkably simple condition, to which we refer to as linearity. The rank of an orthogonality space is, loosely speaking, the maximal number of mutually orthogonal elements. In case that the rank is at least $4$, linearity is sufficient to lead from the simple relational structure of an orthogonality space to a Hermitian space.

Accordingly, the focus of investigations of orthogonality spaces has up to now mostly been on the case that the rank is $4$ or higher. In contrast, this paper is focussed on the case that the orthogonality space is finite and of rank $2$ or $3$, where there is no general representation theory. Often adopting the point of view of graph theory, we establish for such spaces a number of interesting combinatorial properties.

\section{Basic notions, definitions and results}

First, we recall some basic concepts. We also refer to \cite{OML} and \cite{GT} for the notions concerning \rev{modular} and orthomodular structures, and graph theory.

\begin{definition}
An {\em orthogonality space} is a non-empty set $X$ equipped with a symmetric, irreflexive binary relation $\perp$, 
called the {\em orthogonality relation}. The supremum of the cardinalities of sets of mutually orthogonal elements of $X$ is called the {\em rank} of~$(X, \perp)$.
\end{definition}

Recall that orthogonality spaces are essentially the same as undirected graphs, understood such that the edges are two-element subsets of the set of nodes (in graph theory called vertices). The rank of an orthogonality space under this identification is the supremum of the sizes of cliques \rev{-- a clique is a subset of vertices of an undirected graph such that every two distinct vertices are adjacent}. 

Consequently, we have the following analogy:
	
		\begin{center}
			\begin{tabular}{c c c} 
				Undirected graphs & $\Leftrightarrow$ & Orthogonality spaces \\
				Adjacents & $\Leftrightarrow$ & Orthogonal elements \\
				Cliques & $\Leftrightarrow$ & Orthogonal subsets \\
				Maximal cliques & $\Leftrightarrow$ & Maximal orthogonal subsets
			\end{tabular} 
		\end{center}  
		
In the sequel, we will use both kinds of notations interchangeably.		

We sometimes prefer to specify an orthogonality space by means of
\begin{align*}
	M_{\perp} \, := \, \textrm{ set of its maximal orthogonal subsets}.
	\end{align*}

\begin{definition}
	A  {\em path} in an orthogonality space is a sequence of distinct vertices  such that adjacent vertices in the
	sequence are orthogonal in the orthogonality space. The {\em length} of a path is the number of edges on the path. 
	The {\em distance} between two vertices $a$ and $b$, denoted {by} $d(a,b)$, is the length of a shortest $a - b$ path 
	if any; otherwise $d(a,b)=\infty$. We say that an orthogonality space is {\em connected} if it is connected in the graph theoretical sense.
	The {\em diameter} {$d(X)$} of a connected non-trivial orthogonality space $(X,\perp)$ is the supremum of the distances between any pair of different vertices. 
\end{definition} 

Note that the distance function is a metric on the vertex set of an orthogonality space, 

\begin{enumerate}[{\rm(D1)}]
	\item $a\perp b$  if and only if $d(a,b)=1$, and 
	\item[{\rm(D2)}]  if $A$ is a subset of $X$, ${\perp_A}={\perp} \cap (A\times A)$, $(A, \perp_A)$ is connected and $|A|\geq 2$, 
then $(A, \perp_A)$ has diameter 1 if and only if $A$ is an orthogonal subset of $(X,\perp)$. 
\end{enumerate}

But we are not motivated by graph theory, our primary example  originates in quantum physics.

\begin{example} \label{ex:standard-example-1}
Let $H$ be a Hilbert space. Then the set $P(H)$ of one-dimensional subspaces of $H$, together with the usual orthogonality relation, is an orthogonality space, whose rank coincides with the dimension of $H$.
\end{example}

For an orthogonality space $(X,\perp)$, the orthogonal complement of $A \subseteq X$ is given by:
\begin{align*}
 A ^{\perp} \, = \, \{ x \in X \colon x \perp a, \, \text{for all}\ a \in A \}. 
 \end{align*}

The \rev{unary operation on the power set $\mathcal{P}(X)$ that sends} $A$ to $A\cc$ is a closure operator on $X$. We call the closed subsets {\it orthoclosed} and we denote the collection of orthoclosed subsets by ${\mathcal C}(X, \perp)$.

\begin{definition}
	An orthogonality space $(X, \perp)$ is called {\it linear} if, for any two distinct elements $e, f \in X$, there is a third element $g$ such that $\{e,f\}\c = \{e,g\}\c$ and exactly one of $f$ and $g$ is orthogonal to $e$.
\end{definition}

In other words, for $(X, \perp)$ to be linear means that, for any two distinct elements $e,f \in X$:
\begin{itemize}
	\item[(L1)] if $e \notperp f$, there exists a $g \perp e$ such that $\{ e, f \}\c = \{ e, g \}\c$
	
	\item[(L2)] if $e \perp f$, there exists a $g \notperp e$ such that $\{ e, f \}\c = \{ e, g \}\c$.
\end{itemize}
Note that in both cases $g$ is necessarily distinct from $e$ and $f$. This immediately implies that 
any non-trivial linear  orthogonality space $(X, \perp)$ has at least 3 elements. 

Evidently, if an orthogonality space $(X, \perp)$ fulfills (L1) and $A\in {\mathcal C}(X, \perp)$ then the 
orthogonality space $(A, \perp_A)$ with the induced orthogonality relation $\perp_A \, = \, \perp\cap \,\, (A\times A)$ fulfills (L1) as well.


\begin{proposition}
	There is no implication between the conditions (L1) and (L2) in the linear orthogonality space definition.
\end{proposition}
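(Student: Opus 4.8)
The plan is to prove the two non-implications separately by exhibiting, for each direction, a finite orthogonality space in which one of the two conditions is satisfied while the other is violated. Because (L1) and (L2) are conditionals restricted to non-orthogonal and to orthogonal pairs respectively, the most transparent witnesses are the two extreme graphs: the complete graph $K_n$ and the edgeless graph on $n$ vertices, with $n\ge 2$ (note that connectedness is not assumed in the definitions, so both are admissible).

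First, to see that (L1) does not imply (L2), I would take $(X,\perp)$ to be a complete orthogonality space, i.e.\ one in which any two distinct elements are orthogonal. Then there is no pair $e,f$ with $e\notperp f$, so the hypothesis of (L1) is never met and (L1) holds vacuously. On the other hand, fix any orthogonal pair $e\perp f$: condition (L2) would require a witness $g$, distinct from $e$ and $f$, with $g\notperp e$. But in a complete space every element distinct from $e$ is orthogonal to $e$, so no such $g$ exists and (L2) fails; the complement equality $\{e,f\}\c=\{e,g\}\c$ never even needs to be tested.

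Dually, to see that (L2) does not imply (L1), I would take $(X,\perp)$ to be the edgeless orthogonality space, whose orthogonality relation is empty. Now no pair is orthogonal, so the hypothesis of (L2) is never met and (L2) holds vacuously; yet for any (necessarily non-orthogonal) pair $e,f$ condition (L1) demands a witness $g\perp e$, and since $e$ has no orthogonal partner at all, (L1) fails.

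The verification is essentially immediate, so there is no serious obstacle; the only point requiring care is the clause that the witness $g$ be distinct from $e$ and $f$ together with the requirement that exactly one of $f,g$ be orthogonal to $e$, which is exactly what one invokes when arguing that no admissible $g$ can exist in the failing case. To guard against the objection that the failure might be an artifact of a degenerate, too-small example, I would state both constructions for an arbitrary $n\ge 2$ rather than a single fixed instance, making clear that the separation persists at every size.
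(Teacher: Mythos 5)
Your proof is correct, but it takes a genuinely different and more degenerate route than the paper's. The paper separates the two conditions with non-trivial witnesses: a $6$-element space with $M_\perp=\{\{0,1,2,3\},\{0,1,4,5\}\}$ that satisfies (L1) \emph{non-vacuously} yet fails (L2), and a $7$-element space that satisfies (L2) \emph{non-vacuously} yet fails (L1). You instead use the two extremes --- the complete graph, where (L1) is vacuous and (L2) fails because the only element non-orthogonal to $e$ is $e$ itself (excluded by the ``third element'' clause), and the edgeless graph, where (L2) is vacuous and (L1) fails for want of any $g\perp e$. Both verifications are sound, both families are legitimate orthogonality spaces, and your $n=2$ instances are exactly the unique (L1)-space and the unique (L2)-space on two points recorded in Tables~\ref{table-l1} and~\ref{table-l2}, so the proposition as literally stated is fully established; your construction also has the minor merit of working uniformly for every $n\ge 2$. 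What your argument does not deliver, and the paper's examples do, is the stronger and more informative fact that the two conditions remain independent even on spaces where both hypotheses are actually instantiated, i.e.\ where the satisfied condition holds for a non-empty set of pairs rather than vacuously; if you want your write-up to carry the same force as the paper's, you should either add such non-degenerate witnesses or explicitly flag that your separation relies on vacuous satisfaction.
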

\begin{proof}
	Let $X$ be a 6 element set. Then the orthogonality space $(X,\perp)$ where
	\begin{align*}
		M_\perp \, = \{\{0, 1, 2, 3\}, \{0, 1, 4, 5\}\}
	\end{align*}
	\begin{align*}
	\begin{tikzpicture}[scale=1.5,every node/.style={draw=black,scale=0.6,circle,fill=brightgreen}]
	\node (0) at (0,0.3) {2};
	\node (2) at (0,1.3) {3};
	\node (4) at (1,0) {0};
	\node (5) at (1,1) {1};
	\node (1) at (2,0.3) {4};
	\node (3) at (2,1.3) {5};
	\draw[line width=0.6mm] (0) to (2);
	\draw[line width=0.6mm] (0) to (4);
	\draw[line width=0.6mm] (0) to (5);
	\draw[line width=0.6mm] (2) to (5);
	\draw[line width=0.6mm] (4) to (5);
	\draw[line width=0.6mm] (1) to (4);
	\draw[line width=0.6mm] (2) to (4);
	\draw[line width=0.6mm] (1) to (3);
	\draw[line width=0.6mm] (1) to (5);
	\draw[line width=0.6mm] (3) to (4);
	\draw[line width=0.6mm] (3) to (5);
	\end{tikzpicture}
	\end{align*}
	fulfills (L1). However, it does not fulfill (L2) since $2 \perp 0$ and $\{2,0\}^{\perp}= \{3,1\}$ but there is no $g \notperp 2$ such that \mbox{$\{2,g\}^{\perp} = \{3,1\}$}. 
	
	On the other hand, for a 7 element set $Y$, the orthogonality space $(Y,\perp')$ where
	\begin{align*}
		M_{\perp'} \, = \{\{0, 4\}, \{0, 6\}, \{5, 1\}, \{5, 2\}, \{5, 3\}, \{6, 1\}, \{6, 2\}\}
	\end{align*}
	\begin{align*}
	\begin{tikzpicture}[scale=1.5,every node/.style={draw=black,scale=0.6,circle,fill=brightgreen}]
	\node (3) at (0.3,0) {3};
	\node (5) at (1,0) {5};
	\node (1) at (1.5,0.5) {1};
	\node (2) at (1.5,-0.5) {2};
	\node (6) at (2,0) {6};
	\node (0) at (2.7,0) {0};
	\node (4) at (2.7,-0.5) {4};
	\draw[line width=0.6mm] (3) to (5);
	\draw[line width=0.6mm] (5) to (1);
	\draw[line width=0.6mm] (5) to (2);
	\draw[line width=0.6mm] (2) to (6);
	\draw[line width=0.6mm] (6) to (1);
	\draw[line width=0.6mm] (6) to (0);
	\draw[line width=0.6mm] (0) to (4);
	\end{tikzpicture}
	\end{align*}
	fulfills (L2). However, it does not fulfill (L1) since $0 \notperp 2$ and $\{0,2\}^{\perp} = \{6\}$ but there is no $g \perp 0$ such that \mbox{$\{0,g\}^{\perp}=\{6\}$}. 
\end{proof}

The number of orthogonality spaces (up to isomorphism) fulfilling (L1) is given in \rev{Table \ref{table-l1}}.

\begin{table}[H] 
	\begin{center}
		\caption{}
		\label{table-l1}
		\begin{tabular}{? c | c c | c c ?} 
			\specialrule{.1em}{.1em}{0em} 
			& \multicolumn{2}{c|}{Numbers of all} & \multicolumn{2}{c?}{Numbers of connected} \\ 
			$|X|$ & OS &  (L1)-OS & OS &  (L1)-OS  \\
			\hline
			2 & 2 & 1 & 1 & 1 \\
			3 & 4 & 1 & 2 & 1 \\
			4 & 11 & 2 & 6 & 1 \\
			5 & 34 & 2 & 21 & 2 \\
			6 & 156 & 3 & 112 & 2 \\
			7 & 1,044 & 3 & 853 & 3 \\
			8 & 12,346 & 5 & 11,117 & 4 \\
			9 & 274,668 & 5 & 261,080 & 5 \\
			10 & 12,005,168 & 7 & 11,716,571 & 6 \\
			\specialrule{.1em}{.0em}{0em} 
		\end{tabular} 
	\end{center}
\end{table}

Similarly, the number of orthogonality spaces fulfilling (L2) is \rev{given in Table \ref{table-l2}}.

\begin{table}[H]  
	\begin{center}
		\caption{}
		\label{table-l2}
		\begin{tabular}{? c | c c | c c ?} 
			\specialrule{.1em}{.1em}{0em} 
			& \multicolumn{2}{c|}{Numbers of all} & \multicolumn{2}{c?}{Numbers of connected} \\ 
			$|X|$ & OS &  (L2)-OS  & OS &  (L2)-OS \\
			\hline
			2 & 2 & 1 & 1 & 0 \\
			3 & 4 & 2 & 2 & 0 \\
			4 & 11 & 4 & 6 & 0 \\
			5 & 34 & 8 & 21 & 0 \\
			6 & 156 & 21 & 112 & 2 \\
			7 & 1,044 & 57 & 853 & 8 \\
			8 & 12,346 & 220 & 11,117 & 70 \\
			9 & 274,668 & 1,056 & 261,080 & 490 \\
			10 & 12,005,168 & 7,301 & 11,716,571 & 4,577 \\
			\specialrule{.1em}{.0em}{0em} 
		\end{tabular} 
	\end{center}
\end{table}

\rev{Consequently, the number of linear orthogonality spaces can be seen in Table \ref{table-l12} below.}

\begin{table}[H]  
	\begin{center}
		\caption{}
		\label{table-l12}
		\begin{tabular}{? c | c c | c c ?} 
			\specialrule{.1em}{.1em}{0em} 
			& \multicolumn{2}{c|}{Numbers of all} & \multicolumn{2}{c?}{Numbers of connected} \\ 
			$|X|$ & OS &  LOS  & OS &  LOS \\
			\hline
			3 & 4 & 0 & 2 & 0 \\
			4 & 11 & 1 & 6 & 0 \\
			5 & 34 & 0 & 21 & 0 \\
			6 & 156 & 1 & 112 & 0 \\
			7 & 1,044 & 0 & 853 & 0 \\
			8 & 12,346 & 1 & 11,117 & 0 \\
			9 & 274,668 & 0 & 261,080 & 0 \\
			10 & 12,005,168 &1 & 11,716,571 & 0 \\
			\specialrule{.1em}{.0em}{0em} 
		\end{tabular} 
	\end{center}
\end{table}

	In the computational part of this study, we first obtained our orthogonality space catalogue (up to isomorphism) through \verb|nauty| which is a program written in \verb|C| language, see \cite{McKay}. Afterwards, we processed these data sets in our \verb|Python| algorithms to check whether a given orthogonality space fulfills the condition (L1) and/or (L2). \rev{For more details, we refer the readers to \cite{ameql}.}

\begin{remark}\label{remarklemma} Recall that, for any orthogonality space $(X,\perp)$ fulfilling (L1) of finite rank $m$, we know from 
\cite[Lemma 3.5]{Vet3} that ${\mathcal C}(X, \perp)$ is an atomistic, modular ortholattice of length $m$. In particular, this implies that any two maximal orthogonal subsets of $(X,\perp)$ have the same cardinality $m$.
\end{remark}

\begin{remark}
	However, the property given above is not valid when assuming (L2) alone. For instance, for an 8 element set $X$, we have an orthogonality space $(X,\perp)$ where
		\begin{align*}
		M_\perp \, = \{\{0, 4\}, \{0,7\}, \{1,5,7\} , \{1,6\} , \{2,5\} , \{3,6\}  \}
	\end{align*}
	\begin{align*}
	\begin{tikzpicture}[scale=1.25,every node/.style={draw=black,scale=0.6,circle,fill=brightgreen}]
		\node (4) at (0,0) {4};
		\node (0) at (1,0) {0};
		\node (3) at (2,0) {3};
		\node (5) at (0,1) {5};
		\node (7) at (1,1) {7};
		\node (2) at (0,2) {2};
		\node (1) at (0.65,2) {1};
		\node (6) at (2,2) {6};
		\draw[line width=0.6mm] (4) to (0);
		\draw[line width=0.6mm] (7) to (0);
		\draw[line width=0.6mm] (3) to (6);
		\draw[line width=0.6mm] (7) to (5);
		\draw[line width=0.6mm] (5) to (1);
		\draw[line width=0.6mm] (7) to (1);
		\draw[line width=0.6mm] (6) to (1);
		\draw[line width=0.6mm] (2) to (5);
	\end{tikzpicture}
\end{align*}	
that fulfills (L2). Moreover, it has rank 3 but not all maximal orthogonal subsets have the same cardinality.
\end{remark}

\begin{definition}
	An orthogonality space $(X,\perp)$ is called \rev{\em irredundant} if,
	\begin{align*}
	\{a\}^{\perp} = \{b\}^{\perp} \implies a = b \, ,
	\end{align*}
	is true for each $a,b \in X$. Moreover, $(X, \perp)$ is called \rev{\em strongly irredundant} if,
	\begin{align*}
	\{a\}^{\perp} \subseteq \{b\}^{\perp} \implies a = b \, ,
	\end{align*}
	is true for each $a,b \in X$. It is clear that, strong irredundancy implies irredundancy. 
\end{definition}

The following two properties are already captured in the proof of \cite[Lemma 5.3]{nos}. 

\begin{lemma}
	An orthogonality space fulfilling (L1) is irredundant.
\end{lemma}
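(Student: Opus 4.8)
The plan is to argue by contradiction: assume $\{a\}^{\perp} = \{b\}^{\perp}$ yet $a \neq b$, and split according to whether or not $a \perp b$. In either branch I expect to produce an element that is orthogonal to itself, which is impossible because $\perp$ is irreflexive.

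First I would dispose of the case $a \perp b$. Here $b \in \{a\}^{\perp}$, and since $\{a\}^{\perp} = \{b\}^{\perp}$ this gives $b \in \{b\}^{\perp}$, i.e.\ $b \perp b$, contradicting irreflexivity. Hence I may assume $a \notperp b$, and now the defining property (L1) becomes available for the distinct, non-orthogonal pair $a, b$: there is some $g \perp a$ with $\{a,b\}^{\perp} = \{a,g\}^{\perp}$.

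The key observation is that the orthogonal complement of a pair decomposes as an intersection of the complements of its members, $\{a,b\}^{\perp} = \{a\}^{\perp} \cap \{b\}^{\perp}$ and likewise $\{a,g\}^{\perp} = \{a\}^{\perp} \cap \{g\}^{\perp}$. Using the hypothesis $\{a\}^{\perp} = \{b\}^{\perp}$, the first of these collapses to $\{a,b\}^{\perp} = \{a\}^{\perp}$. Feeding this into the equality supplied by (L1) yields $\{a\}^{\perp} = \{a\}^{\perp} \cap \{g\}^{\perp}$, that is, $\{a\}^{\perp} \subseteq \{g\}^{\perp}$. Since $g \perp a$ means $g \in \{a\}^{\perp}$, this inclusion forces $g \in \{g\}^{\perp}$, i.e.\ $g \perp g$ --- again contradicting irreflexivity.

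There is little genuine obstacle here; the argument is short once one notices that the double complement $\{a,b\}^{\perp}$ collapses to $\{a\}^{\perp}$ under the hypothesis $\{a\}^{\perp} = \{b\}^{\perp}$. The only point requiring a little care is ensuring that (L1) is actually applicable, which is precisely why the case $a \perp b$ is separated out first; in both branches the final contradiction stems from irreflexivity of the orthogonality relation.
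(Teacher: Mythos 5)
Your proof is correct and follows essentially the same route as the paper's: split on whether $a \perp b$, invoke (L1) in the non-orthogonal case to get $g \perp a$ with $\{a,b\}^{\perp} = \{a,g\}^{\perp}$, and derive a contradiction from irreflexivity. The only cosmetic difference is that you phrase it as a proof by contradiction and route through the inclusion $\{a\}^{\perp} \subseteq \{g\}^{\perp}$, whereas the paper directly exhibits $g \in \{a\}^{\perp} \setminus \{b\}^{\perp}$; the substance is identical.
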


\begin{proof}
	Let $a \neq b$. We have two possibilities:
	
	Suppose that $a \notperp b$. Using (L1) property, we have $c \in X$ such that $c \in \{a\}^{\perp}$ and $\{a,b\}^{\perp} = \{a,c\}^{\perp}$. Since $c \not\in  \{a,c\}^{\perp}$, we have $c \not\in  \{a,b\}^{\perp}$. However, we know that $c \in \{a\}^{\perp}$. So it is necessary to be $c \not\in \{b\}^{\perp}$. Therefore, we have $\{a\}^{\perp} \neq \{b\}^{\perp}$.
	
	Suppose that $a \perp b$. It follows $a \in \{b\}^{\perp}$. Since $a \not\in \{a\}^{\perp}$, we already have $\{a\}^{\perp} \neq \{b\}^{\perp}$.
\end{proof}

\begin{lemma}\label{str-irr}
	An orthogonality space fulfilling (L1) is strongly irredundant.
\end{lemma}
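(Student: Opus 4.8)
The plan is to argue by contradiction: assume $\{a\}^{\perp} \subseteq \{b\}^{\perp}$ together with $a \neq b$, and split into the same two cases $a \perp b$ and $a \notperp b$ that were used in the preceding lemma (the proof that (L1) implies irredundancy). The goal is to derive in each case a violation of irreflexivity, i.e.\ an element orthogonal to itself.

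The first case should be immediate and does not even require (L1). If $a \perp b$, then by symmetry $b \in \{a\}^{\perp}$, and the hypothesized inclusion $\{a\}^{\perp} \subseteq \{b\}^{\perp}$ forces $b \in \{b\}^{\perp}$, that is $b \perp b$, contradicting irreflexivity. So this case cannot occur under the inclusion, and we may assume $a \notperp b$.

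The second case is where (L1) enters, and the key preparatory observation is that the inclusion collapses the pair complement to a singleton complement: since $\{a,b\}^{\perp} = \{a\}^{\perp} \cap \{b\}^{\perp}$ and $\{a\}^{\perp} \subseteq \{b\}^{\perp}$, we get $\{a,b\}^{\perp} = \{a\}^{\perp}$. Now, because $a \notperp b$ and $a \neq b$, condition (L1) supplies an element $g \perp a$ with $\{a,g\}^{\perp} = \{a,b\}^{\perp} = \{a\}^{\perp}$. Expanding $\{a,g\}^{\perp} = \{a\}^{\perp} \cap \{g\}^{\perp}$ then yields $\{a\}^{\perp} = \{a\}^{\perp} \cap \{g\}^{\perp}$, hence $\{a\}^{\perp} \subseteq \{g\}^{\perp}$. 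Finally, since $g \perp a$ means $g \in \{a\}^{\perp}$, we obtain $g \in \{g\}^{\perp}$, i.e.\ $g \perp g$, once more contradicting irreflexivity. Both cases being impossible, we conclude $a = b$.

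I do not expect a genuine obstacle: strong irredundancy is only mildly stronger than irredundancy, and the single new idea is to use the inclusion to rewrite $\{a,b\}^{\perp}$ as $\{a\}^{\perp}$ so that the witness $g$ produced by (L1) ends up lying in its own complement. If any care is needed, it is merely in confirming that $g$ is a legitimate output of (L1) (in particular $g \notin \{a,b\}$, which is already guaranteed by the remark that the witness in (L1)/(L2) is always distinct from $e$ and $f$), so that the orthogonality $g \perp a$ can be combined with the equality of complements without circularity.
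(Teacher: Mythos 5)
Your proof is correct and follows essentially the same route as the paper's: reduce to the case $a \notperp b$, use the inclusion to get $\{a,b\}^{\perp} = \{a\}^{\perp}$, and let the (L1) witness $g$ contradict irreflexivity. The only cosmetic differences are that the paper phrases the final contradiction as $c \in \{a\}^{\perp} \cap \{a\}^{\perp\perp}$ rather than $g \in \{g\}^{\perp}$, and it delegates the equality case to the preceding irredundancy lemma, whereas your case split handles everything directly.
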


\begin{proof}
	If $\{a\}^{\perp} \subseteq \{b\}^{\perp}$, then it is enough to prove only that $\{a\}^{\perp} = \{b\}^{\perp}$ which yields $a = b$, from irredundancy. Now, suppose that $\{a\}^{\perp} \subseteq \{b\}^{\perp}$. Then, only one of the following conditions holds: 
	\begin{itemize}
		\item $\{a\}^{\perp} \subset \{b\}^{\perp}$.
		\item $\{a\}^{\perp} = \{b\}^{\perp}$.
	\end{itemize}

	We will show that, the first one is not possible. Consider that $\{a\}^{\perp} \subset \{b\}^{\perp}$. Then, we have:
	\begin{itemize}
		\item $a \neq b$ (since the orthogonality is well-defined).
		\item $\{a,b\}^{\perp} = \{a\}^{\perp}$
		\item $a \notperp b$ (because, if $a \perp b$, it follows $b \in \{a\}^{\perp} \subset \{b\}^{\perp}$ which is not possible).
	\end{itemize}

	By using (L1) property, there exists $c \in \{a\}^{\perp}$ such that $\{a,b\}^{\perp} = \{a,c\}^{\perp}$ which follows $\{a\}^{\perp} = \{a,c\}^{\perp}$. Consequently, we have $c \in \{a,c\}^{\perp\perp} = \{a\}^{\perp\perp}$ that yields the contradiction:
	\begin{itemize}
		\item $c \in \{a\}^{\perp}$ and $c \in \{a\}^{\perp\perp}$.
	\end{itemize}
\end{proof}

\begin{lemma}
	Let $(X,\perp)$ be an orthogonality space and $a \in X$. The following are equivalent: 
	\begin{enumerate}
		\item $\{a\}^{\perp} = X \setminus \{a\}$.
		\item In the graph representation of $(X,\perp)$, the node $a$ is connected to all nodes except itself.
		\item The intersection of all maximal orthogonal subsets of $(X,\perp)$ contains $a$.
	\end{enumerate} 
\end{lemma}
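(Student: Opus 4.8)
The plan is to establish the three equivalences by first noting that (1) and (2) are merely a reformulation of each other, and then proving the cycle (1) $\Rightarrow$ (3) $\Rightarrow$ (1). By definition $\{a\}^{\perp}$ consists exactly of those elements orthogonal to $a$, which under the graph identification are precisely the nodes adjacent to $a$; and since $\perp$ is irreflexive we always have $a \notin \{a\}^{\perp}$. Hence the assertion $\{a\}^{\perp} = X \setminus \{a\}$ says nothing more than that $a$ is adjacent to every node different from itself, so (1) $\Leftrightarrow$ (2) requires no computation at all.

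For (1) $\Rightarrow$ (3), I would take an arbitrary maximal orthogonal subset $M$ and show $a \in M$. If this failed, then since (1) guarantees $a \perp b$ for every $b \in M \subseteq X \setminus \{a\}$, the set $M \cup \{a\}$ would again consist of mutually orthogonal elements and would strictly contain $M$, contradicting its maximality. As $M$ was arbitrary, $a$ lies in every maximal orthogonal subset, hence in their intersection.

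For the converse (3) $\Rightarrow$ (1), I would fix an arbitrary $b \neq a$ and aim to derive $a \perp b$. The crucial step is that the singleton $\{b\}$, which is trivially an orthogonal subset, extends to some maximal orthogonal subset $M$; in the finite setting this is immediate by greedily adjoining admissible elements, and in full generality it is the only point where a Zorn-type argument enters, since the union of a chain of orthogonal subsets is again orthogonal. Invoking hypothesis (3) gives $a \in M$, and as $b \in M$ with $a \neq b$ and $M$ mutually orthogonal, we obtain $a \perp b$. Since $b$ ranged over all elements distinct from $a$, this yields $\{a\}^{\perp} = X \setminus \{a\}$.

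The whole argument is elementary; the single conceptual point—and the natural candidate for an overlooked gap—is the existence of a maximal orthogonal subset containing a prescribed element, which is used in (3) $\Rightarrow$ (1). Because the present paper restricts attention to the finite case, this extension is unproblematic, and the three conditions are thereby shown to be equivalent.
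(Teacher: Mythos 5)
Your proof is correct; the paper itself gives no real argument here, merely asserting that the lemma ``follows from the definition of orthogonality and the correspondence between orthogonality spaces and graphs,'' and your write-up supplies exactly the details that assertion leaves implicit. The one point you rightly single out --- that every element lies in some maximal orthogonal subset, which needs Zorn's lemma in the infinite case but is immediate for finite $X$ --- is the only step with any content, and you handle it properly.
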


\begin{proof}
 \rev{This follows from the definition of orthogonality and the correspondence between orthogonality spaces and graphs given in the introduction.}
\end{proof}

\begin{proposition}
	Let $(X,\perp)$ be an orthogonality space fulfilling (L1). If we have $\{a,b\}^{\perp} = \{a\}^{\perp}$ for any $a,b \in X$, then $a = b$.
\end{proposition}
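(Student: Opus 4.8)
The plan is to reduce the stated hypothesis to the inclusion premise of strong irredundancy and then invoke Lemma \ref{str-irr} directly, so that the proposition becomes an immediate corollary. The first step is to record the elementary set-theoretic identity $\{a,b\}^{\perp} = \{a\}^{\perp} \cap \{b\}^{\perp}$, which follows straight from the definition of the orthogonal complement: a point lies in $\{a,b\}^{\perp}$ precisely when it is orthogonal to every element of $\{a,b\}$, that is, when it is orthogonal to both $a$ and $b$ simultaneously.

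With this identity in hand, the assumption $\{a,b\}^{\perp} = \{a\}^{\perp}$ rewrites as $\{a\}^{\perp} \cap \{b\}^{\perp} = \{a\}^{\perp}$, and the second step is to observe that this equality is equivalent to the inclusion $\{a\}^{\perp} \subseteq \{b\}^{\perp}$. Indeed, an intersection equals one of its factors exactly when that factor is contained in the other. This inclusion is precisely the premise occurring in the definition of strong irredundancy.

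The final step is then to apply Lemma \ref{str-irr}: since $(X,\perp)$ fulfills (L1), that lemma guarantees it is strongly irredundant, whence $\{a\}^{\perp} \subseteq \{b\}^{\perp}$ forces $a = b$, as desired.

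I do not anticipate any genuine obstacle here, since the whole content of the proposition is the recognition that the displayed equation is merely a disguised form of the inclusion hypothesis of strong irredundancy. The only place demanding a moment's care is the direction of the inclusion --- one should verify that $\{a\}^{\perp} \cap \{b\}^{\perp} = \{a\}^{\perp}$ yields $\{a\}^{\perp} \subseteq \{b\}^{\perp}$ and not the reverse containment --- but this is immediate from the remark above on intersections.
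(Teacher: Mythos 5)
Your proof is correct and follows exactly the same route as the paper: both reduce the hypothesis $\{a,b\}^{\perp}=\{a\}^{\perp}$ to the inclusion $\{a\}^{\perp}\subseteq\{b\}^{\perp}$ and then invoke strong irredundancy from Lemma \ref{str-irr}. You merely make explicit the intersection identity $\{a,b\}^{\perp}=\{a\}^{\perp}\cap\{b\}^{\perp}$ that the paper leaves implicit.
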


\begin{proof}
	Consider that $(X,\perp)$ is an orthogonality space fulfilling (L1). Therefore, it is strongly irredundant. Consequently, we have:
	\begin{align*}
		\{a,b\}^{\perp} = \{a\}^{\perp} & \implies \{a\}^{\perp} \subseteq \{b\}^{\perp} \\
		& \implies a = b
	\end{align*}
\end{proof}

\medskip

\begin{definition}
	An orthogonality space $(X,\perp)$ is called 
	\begin{enumerate}
	\item \rev{\em irreducible} if $X$ \rev{cannot} be partitioned into two non-empty subsets $A,B$ such 
	that $a \perp b$ for all $a \in A$ and $b \in B$; 
	\item a \rev{\em Dacey space} if, for any $A \in {\mathcal C}(X, \perp)$ and any maximal orthogonal subset $D$ of $A$, we have that $D\cc = A$.
	\end{enumerate}
\end{definition}

Note that $X$  is {irreducible} if and only if $X$ \rev{cannot} be partitioned into two non-empty orthoclosed subsets $A,B$ such 
	that $a \perp b$ for all $a \in A$ and $b \in B$.

\begin{lemma}
	Let $(X,\perp)$ be an orthogonality space  and 
	there exist an element $a \in X$ such that $\{a\}\c = X \setminus \{a\}$. Then, $(X,\perp)$ is not irreducible.
\end{lemma}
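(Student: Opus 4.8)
The plan is to establish non-irreducibility directly by exhibiting a partition of $X$ into two non-empty blocks that are completely orthogonal to one another. Since irreducibility is defined precisely as the \emph{absence} of such a partition, producing one settles the claim; no closure operators or lattice-theoretic machinery are needed.

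First I would take the partition $A = \{a\}$ and $B = X \setminus \{a\}$. I must check that both blocks are non-empty. The block $A$ is trivially non-empty, while $B$ is non-empty as soon as $X$ has at least two elements. This is the one point in the argument that deserves a remark: a single-point space admits no partition into two non-empty subsets and is therefore vacuously irreducible, so the statement is to be read under the standing convention that the space is non-trivial, i.e.\ $|X| \geq 2$. With this understanding $B \neq \emptyset$.

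Next I would verify the required cross-orthogonality, namely that every element of $A$ is orthogonal to every element of $B$. Because $A = \{a\}$ is a singleton, this reduces to showing $a \perp b$ for each $b \in X \setminus \{a\}$, which is exactly the content of the hypothesis $\{a\}^{\perp} = X \setminus \{a\}$. Hence $A$ and $B$ form a partition of $X$ into two non-empty mutually orthogonal subsets, so $(X,\perp)$ fails to be irreducible. The argument presents no genuine obstacle; the only step requiring any care is excluding the degenerate one-element case, which the non-triviality convention handles.
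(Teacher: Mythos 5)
Your proof is correct and follows exactly the same route as the paper's: exhibiting the partition $A=\{a\}$, $B=X\setminus\{a\}$, whose cross-orthogonality is immediate from the hypothesis $\{a\}^{\perp}=X\setminus\{a\}$. Your extra remark about excluding the one-element space is a legitimate (and slightly more careful) observation that the paper leaves implicit, but it does not change the argument.
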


\begin{proof}
	We have a partition of $X$ being $A := \{a\}$ and \mbox{$B := X \setminus \{a\}$} which is enough 
	to say $(X,\perp)$ is not irreducible.
\end{proof}

The following result is contained in \cite[Theorem 5.6]{nos}.

\begin{lemma}
	An orthogonality space $(X,\perp)$ of finite rank is linear if and only if $X$ is an irreducible, strongly irredundant Dacey space. 
\end{lemma}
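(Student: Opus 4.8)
The plan is to prove the two implications separately, drawing on the lemmas already established for (L1) and on the lattice-theoretic description of $\mathcal{C}(X,\perp)$ recalled in Remark~\ref{remarklemma}. Throughout it is convenient to observe that, by strong irredundancy, every singleton is orthoclosed, i.e. $\{a\}\cc = \{a\}$ for all $a$, and that irreducibility of $(X,\perp)$ amounts to the connectedness of the \emph{non-orthogonality graph} on $X$ (vertices $X$, an edge joining $x \neq y$ exactly when $x \notperp y$): a reducing partition is precisely a separation of this graph.

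For the direction ``linear $\Rightarrow$ irreducible, strongly irredundant Dacey space'', strong irredundancy is immediate from Lemma~\ref{str-irr}, which uses only (L1). For the Dacey property I would invoke Remark~\ref{remarklemma}: since (L1) holds and the rank is finite, $\mathcal{C}(X,\perp)$ is an atomistic modular ortholattice of finite length; moreover, for any orthoclosed $A$ the space $(A,\perp_A)$ again satisfies (L1), so all its maximal orthogonal subsets share a common cardinality, equal to the height of $A$. Given orthoclosed $A$ and a maximal orthogonal subset $D$ of $A$, one has $D\cc \subseteq A$; further $D$ is still a maximal orthogonal subset of $D\cc$ (any proper orthogonal extension inside $D\cc \subseteq A$ would contradict maximality in $A$), so $D\cc$ and $A$ have the same height and therefore coincide. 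For irreducibility I would use (L2): assuming a reducing partition $X = A \sqcup B$ with $a \perp b$ for all $a \in A$, $b \in B$, pick $e \in A$ and $f \in B$, so $e \perp f$, and apply (L2) to obtain $g \notperp e$ with $\{e,f\}\c = \{e,g\}\c$. Then $g \in A$ (otherwise $g \in B$ would force $e \perp g$), whence $B \subseteq \{e\}\c \cap \{g\}\c = \{e,g\}\c = \{e,f\}\c$; but $f \in B$ while $f \notin \{e,f\}\c$, a contradiction.

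For the converse I would argue, for a fixed orthoclosed line $A := \{e,f\}\cc$, entirely inside $A$ by means of the Dacey property. To verify (L1) for $e \notperp f$, I extend $\{e\}$ to a maximal orthogonal subset $D$ of $A$; the Dacey property gives $D\cc = A$, and once $A$ is known to have rank $2$ this forces $D = \{e,g\}$ with $g \perp e$ and $\{e,g\}\c = A\c = \{e,f\}\c$, which is exactly (L1). For (L2) with $e \perp f$, the same rank-$2$ analysis shows that for \emph{any} point $g \in A$ with $g \neq e$ and $g \notperp e$ one automatically has $\{e,g\}\cc = A$, so that (L2) reduces to the bare existence of a point of $A$, distinct from $e$ and not orthogonal to $e$.

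The main obstacle is precisely this last existence claim, together with the input it presupposes, namely that $A = \{e,f\}\cc$ has rank $2$. Unlike in the forward direction, here I may not assume modularity, only that $\mathcal{C}(X,\perp)$ is orthomodular (the orthogonality-space counterpart of the Dacey property) and of finite length; the covering property, and hence rank $2$ for the join of two atoms, is not automatic for orthomodular lattices, and this is where irreducibility must enter. Concretely, I expect to prove the contrapositive: if some line $\{e,f\}\cc$ with $e \perp f$ contained no non-orthogonal partner of $e$ — a ``Boolean line'' — then $X$ would admit a reducing partition, i.e. the non-orthogonality graph would disconnect. Establishing this implication, presumably through a centrality argument in $\mathcal{C}(X,\perp)$ showing that such a line isolates a direct factor, is the crux of the proof and the point at which the hypothesis of irreducibility is genuinely used.
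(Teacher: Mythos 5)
Your forward direction is sound and close in spirit to what the cited source does: strong irredundancy is Lemma~\ref{str-irr}, irreducibility via (L2) is a correct and clean argument (the reducing partition forces $f\in B\subseteq\{e,g\}\c=\{e,f\}\c$, contradicting irreflexivity), and the Dacey property does follow from the lattice structure recorded in Remark~\ref{remarklemma} --- although your ``equal height'' step is more delicate than you make it sound, since it presupposes that the lattice-theoretic height of an orthoclosed $A$ equals the common cardinality of its maximal orthogonal subsets, and that $\mathcal{C}(A,\perp_A)$ can be identified with the interval $[\emptyset,A]$ of $\mathcal{C}(X,\perp)$; the standard and safer route is Dacey's theorem itself ($(X,\perp)$ is a Dacey space iff $\mathcal{C}(X,\perp)$ is orthomodular), which is how the paper's reference proceeds. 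Note also that the paper does not prove this lemma at all --- it is quoted from \cite[Theorem 5.6]{nos} --- so the only meaningful comparison is with that external proof.

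The converse direction, however, contains a genuine gap that you yourself flag but do not close. Your verification of (L1) needs the line $A=\{e,f\}\cc$ to have rank $2$ (so that the maximal orthogonal subset $D\ni e$ of $A$ is exactly $\{e,g\}$ and $\{e,g\}\c=D\c=A\c=\{e,f\}\c$), and your verification of (L2) needs, in addition, the existence of a point of $A$ distinct from $e$ and not orthogonal to $e$. Neither is established: orthomodularity of $\mathcal{C}(X,\perp)$ does not give the covering property, and the promised ``centrality argument'' showing that a Boolean line would disconnect the non-orthogonality graph is exactly the content of the hard half of \cite[Theorem 5.6]{nos}, not a routine verification. Saying that you ``expect to prove the contrapositive'' is a plan, not a proof; as it stands the implication (irreducible strongly irredundant Dacey $\Rightarrow$ linear) is unproved. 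To complete it you would need to carry out the argument of the cited theorem: from a maximal orthogonal subset $D$ of $\{e,f\}\cc$ containing $e$, use finiteness of the rank, orthomodularity and strong irredundancy to cut $D$ down to a two-element set with the same biorthogonal closure, and use irreducibility to rule out the degenerate case in which every element of $\{e,f\}\cc\setminus\{e\}$ is orthogonal to $e$.
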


So we immediately obtain 

\begin{corollary}\label{centralnotlinear}
	Let $(X,\perp)$ be an orthogonality space of finite rank, and there exist an element $a \in X$ such that \mbox{$a\c = X \setminus \{a\}$}. Then, $(X,\perp)$ is not linear.
\end{corollary}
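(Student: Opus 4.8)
The plan is to chain together the two results that immediately precede this corollary. We are given that $(X,\perp)$ has finite rank and that some element $a \in X$ satisfies $a\c = X \setminus \{a\}$. The target is to conclude that $(X,\perp)$ is not linear.

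First I would invoke the Lemma stating that an orthogonality space of finite rank is linear if and only if it is an irreducible, strongly irredundant Dacey space. Since this is an equivalence, to show $(X,\perp)$ is \emph{not} linear it suffices to show that at least one of the three defining properties fails. The most convenient one to target is irreducibility, because we already have a dedicated lemma for it.

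The key step is then to apply the Lemma which asserts that if there exists $a \in X$ with $\{a\}\c = X \setminus \{a\}$, then $(X,\perp)$ is not irreducible. Our hypothesis supplies exactly such an element $a$ (the notation $a\c$ in the corollary being shorthand for $\{a\}\c$), so that lemma applies directly and yields that $(X,\perp)$ fails to be irreducible. Since irreducibility is one of the three conditions characterising linearity in the finite-rank case, the failure of irreducibility forces $(X,\perp)$ to be non-linear via the contrapositive of the characterisation lemma.

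I do not anticipate any genuine obstacle here: the corollary is a straightforward composition of two previously established facts, and the only thing to be careful about is the trivial bookkeeping of matching the singleton notation $\{a\}\c$ in the irreducibility lemma with the abbreviated $a\c$ used in the corollary statement, and confirming that the finite-rank hypothesis is what activates the linearity characterisation. No new construction or calculation is required.
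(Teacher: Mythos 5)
Your proposal is correct and matches the paper's intended argument exactly: the corollary is stated there as an immediate consequence of the lemma that an element $a$ with $\{a\}\c = X \setminus \{a\}$ destroys irreducibility, combined with the characterisation of finite-rank linear spaces as irreducible, strongly irredundant Dacey spaces. No further comment is needed.
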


\begin{remark}\label{e-f}
Let $a$ be an element of an orthogonality space $(X,\perp)$ such that $\{a\}^{\perp} = X \setminus \{a\}$. 
The validity of the (L1) condition, i.e.
\begin{itemize}
	\item[(L1)] if $e \notperp f$ for distinct $e,f$; there exists $g \perp e$ such that $\{ e, f \}\c = \{ e, g \}\c$
\end{itemize}
can not be checked through $a$. In other words, none of $e$ and $f$ can be replaced with $a$ while we are checking the condition since there is no $f \in X$ such that $a \notperp f$.
\end{remark}

\noindent Moreover, we also have the following characterization of (L1) property.

\begin{lemma}
	An orthogonality space $(X, \perp)$ of finite rank fulfills (L1) if and only if $X$ is a strongly irredundant Dacey space.
\end{lemma}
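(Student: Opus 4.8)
The plan is to prove the two implications separately, using the preceding characterisation of linearity (a finite-rank space is linear iff it is irreducible, strongly irredundant and Dacey) for the converse, and a direct lattice-theoretic argument for the forward implication. The key point to flag at the outset is that the forward direction \emph{cannot} simply quote that characterisation, since (L2) (equivalently, irreducibility) is exactly the hypothesis we are dropping; the Dacey property must therefore be extracted directly.

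For the forward direction, assume $(X,\perp)$ fulfils (L1) and has finite rank. Strong irredundancy is immediate from Lemma~\ref{str-irr}. For the Dacey property I would invoke Remark~\ref{remarklemma}: $\mathcal C(X,\perp)$ is then a modular ortholattice of finite length, in which the meet is intersection and $p\vee p\c$ is the top $X$. Let $A\in\mathcal C(X,\perp)$ and let $D$ be a maximal orthogonal subset of $A$; since $D\subseteq A=A\cc$ we have $p:=D\cc\subseteq A$. Applying the modular law to $p\le A$ gives $A=A\cap(p\vee p\c)=p\vee(p\c\cap A)$. If $p\neq A$ this forces $p\c\cap A\neq\emptyset$, so this (orthoclosed, hence nonempty) set contains a point $x$; as $x\in p\c=D\c$ we get $x\perp d$ for all $d\in D$ and $x\notin D$, so $D\cup\{x\}$ is a strictly larger orthogonal subset of $A$, contradicting maximality. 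Hence $D\cc=A$, which is the Dacey condition.

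For the converse, assume $(X,\perp)$ is strongly irredundant, Dacey and of finite rank, and decompose it into its irreducible join-components: take the connected components $X_1,\dots,X_k$ of the complementary graph (adjacency $=\notperp$ on distinct points), writing $\perp_i$ for the induced relation on $X_i$. Distinct components are then fully cross-orthogonal, each $X_i$ is irreducible and of finite rank, and $\mathcal C(X,\perp)\cong\prod_i\mathcal C(X_i,\perp_i)$. I would check that both hypotheses descend: for $a,b\in X_i$ one has $\{a\}\c=\{a\}^{\perp_i}\cup\bigcup_{j\neq i}X_j$, so inclusions of induced complements match inclusions of ambient complements and strong irredundancy of $X_i$ follows; and for $D\subseteq A\subseteq X_i$ one verifies $D^{\perp_i\perp_i}=D\cc$, whence the ambient Dacey condition yields the Dacey condition inside $X_i$. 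Each $X_i$ is thus irreducible, strongly irredundant and Dacey of finite rank, hence linear by the preceding lemma, and in particular fulfils (L1).

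It then remains to lift (L1) back to $X$. If $e\notperp f$ are distinct they lie in a common component $X_i$ (cross-component pairs are orthogonal), and applying (L1) in $X_i$ yields $g\in X_i$ with $g\perp e$ and $\{e,f\}^{\perp_i}=\{e,g\}^{\perp_i}$; since for subsets of $X_i$ the ambient complement differs from the induced one only by the fixed set $\bigcup_{j\neq i}X_j$, we obtain $\{e,f\}\c=\{e,g\}\c$, so $X$ fulfils (L1). I expect the main obstacle to be precisely this decomposition bookkeeping: establishing the product description of $\mathcal C(X,\perp)$ and confirming that strong irredundancy and Dacey restrict to the components while (L1) re-globalises. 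None of it is deep, but it is where the single irreducibility hypothesis of the linearity characterisation gets traded against the gap between (L1) and full linearity, so the verification must be made carefully.
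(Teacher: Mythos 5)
Your proof is correct, but it takes a genuinely different route from the paper's in the harder direction. For the forward implication the two arguments essentially coincide: the paper also gets strong irredundancy from Lemma~\ref{str-irr} and then deduces the Dacey property from the orthomodularity of ${\mathcal C}(X,\perp)$ by citing \cite[Lemma 3.5]{Vet3}; you merely unpack that citation into an explicit (ortho)modular-law computation, $A=D\cc\vee((D\cc)^\perp\cap A)$, which is a self-contained and correct rendering of the same idea. The real divergence is in the converse. The paper simply re-runs the relevant portion of the proof of \cite[Theorem 5.6]{nos}, observing that the construction of the witness $g$ for (L1) there does not use irreducibility. You instead keep that theorem as a black box: you decompose $X$ into the connected components of the non-orthogonality graph (finitely many, since one point per component is an orthogonal set and the rank is finite), note that each component is irreducible while strong irredundancy and the Dacey property restrict to it via the identity $\{a\}^\perp=\{a\}^{\perp_i}\cup\bigcup_{j\neq i}X_j$, invoke the full linearity characterization componentwise, and then lift (L1) back using that any non-orthogonal pair lies in a single component. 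The paper's route is shorter but requires the reader to inspect an external proof and trust that irreducibility is not used in the relevant step; yours is self-contained modulo the \emph{statement} of the characterization lemma, at the price of the decomposition bookkeeping, all of which you carry out correctly (the only cosmetic slip is the parenthetical ``orthoclosed, hence nonempty'' --- nonemptiness of $D^{\perp\perp\perp}\cap A$ comes from $A=D\cc\vee(D^{\perp\perp\perp}\cap A)$ and $D\cc\neq A$, not from orthoclosedness).
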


\begin{proof}
	Let $(X,\perp)$ be an orthogonality space of finite rank that fulfills (L1). We know from Proposition \ref{str-irr} that it is strongly irredundant. Moreover, since $\mathcal{C}(X,\perp)$ is an orthomodular lattice, it is a Dacey space \rev{due to \cite[Lemma 3.5]{Vet3}}.
	
	Conversely, let $(X,\perp)$ be a strongly irredundant Dacey space of finite rank. Suppose that $e,f \in X$ such that $e \not\perp f$. 
	Following literally  the proof of \cite[Theorem 5.6]{nos} we can find an element $g\in X$ such 
	that  $e \perp g$ with $\{e,f\}^{\perp} = \{e,g\}^{\perp}$.
\end{proof}

\begin{proposition}
	For each finite orthogonality space with $k$ elements and rank $n$ fulfilling (L1) there exists an orthogonality space fulfilling (L1) with $k+l$ elements and rank $n+l$, for all $l \in \mathbb{N}$.   
\end{proposition}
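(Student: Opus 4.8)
The plan is to reduce to the case $l = 1$ and then iterate: if from any orthogonality space fulfilling (L1) with $k$ elements and rank $n$ we can build one fulfilling (L1) with $k+1$ elements and rank $n+1$, then $l$ successive applications give the claim for arbitrary $l \in \mathbb{N}$. So it suffices to describe and analyse a single extension step. Concretely, I would fix a finite $(X, \perp)$ fulfilling (L1) with $|X| = k$ and rank $n$, and adjoin one fresh element $\omega \notin X$ declared orthogonal to every element of $X$. Formally, set $X' = X \cup \{\omega\}$ and $\perp' = \perp \cup \{(x,\omega),(\omega,x) : x \in X\}$; this relation is symmetric and irreflexive, so $(X', \perp')$ is again an orthogonality space with $|X'| = k+1$, in which $\omega$ is of the ``central'' type discussed in Remark~\ref{e-f}, with $\{\omega\}^{\perp'} = X' \setminus \{\omega\} = X$.

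Next I would compute the rank. Since $\omega$ is orthogonal to all of $X$, for any orthogonal subset $D$ of $(X,\perp)$ the set $D \cup \{\omega\}$ is orthogonal in $(X',\perp')$; taking $D$ to be a maximal orthogonal subset of $X$, which has $n$ elements by Remark~\ref{remarklemma}, yields an orthogonal subset of size $n+1$. Conversely, any orthogonal subset $E$ of $X'$ satisfies $|E \setminus \{\omega\}| \le n$, because $E \setminus \{\omega\}$ is orthogonal in $X$; hence $|E| \le n+1$. Thus the rank of $(X',\perp')$ is exactly $n+1$.

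The only genuinely verification-demanding step is to confirm that $(X', \perp')$ still fulfills (L1). The key observation, which is precisely the content of Remark~\ref{e-f}, is that $\omega$ can never occur as a witness $e$ or $f$ in the antecedent of (L1): since $\omega \perp' x$ for every $x \in X'$, no pair of distinct non-orthogonal elements involves $\omega$. Hence the pairs $e \notperp' f$ are exactly the pairs $e,f \in X$ with $e \notperp f$. For such a pair I would invoke (L1) in $(X,\perp)$ to obtain $g \in X$ with $g \perp e$ and $\{e,f\}^{\perp} = \{e,g\}^{\perp}$, and then note that complements in $X'$ differ from those in $X$ only by the uniform addition of $\omega$, that is $A^{\perp'} = A^{\perp} \cup \{\omega\}$ for every nonempty $A \subseteq X$. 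Consequently $\{e,f\}^{\perp'} = \{e,g\}^{\perp'}$ while $g \perp' e$, so (L1) holds in $X'$.

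I expect no structural obstacle: the construction is designed so that the adjoined element is orthogonal to everything and therefore inert with respect to the non-orthogonality condition driving (L1). The only point requiring care is the bookkeeping of orthogonal complements under the extension, namely verifying the identity $A^{\perp'} = A^{\perp} \cup \{\omega\}$ and checking that it makes the two sides of the (L1) equation agree. Note that the resulting space is not linear, as it fails (L2) by Corollary~\ref{centralnotlinear}; this is harmless, since only (L1) is claimed.
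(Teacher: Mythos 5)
Your proposal is correct and is essentially the paper's own argument: the paper likewise adjoins fresh elements declared orthogonal to everything (by inserting them into every maximal orthogonal subset) and appeals to Remark~\ref{e-f} to see that such central elements cannot occur in the antecedent of (L1); the only difference is that the paper adds all $l$ elements at once rather than iterating a one-element step, which is immaterial. Your version merely spells out the rank computation and the bookkeeping identity $A^{\perp'} = A^{\perp} \cup \{\omega\}$ in more detail than the paper does.
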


\begin{proof}
	Let $(X,\perp)$ be an arbitrary orthogonality space with $k$ elements and rank $n$ fulfilling (L1). Fix distinct elements $x'_1, x'_2, \dots , x'_l \notin X$ for any $l \in \mathbb{N}$, and consider the set \mbox{$X' = X \cup \{x'_1, x'_2, \dots , x'_l\}$}. For each maximal orthogonal subset \mbox{$D \in \, M_\perp$}, we put $D' = D \cup \{x'_1, x'_2, \dots , x'_l\}$ and define $M_{\perp'}$ as the set of maximal orthogonal subsets. We straightforwardly have an orthogonality space \mbox{$(X',\perp')$} of rank $n+l$. Moreover, $(X',\perp')$ fulfills (L1) since any of $x'_1, x'_2, \dots , x'_l$ does not effect (L1) condition from \mbox{Remark \ref{e-f}}.
\end{proof}

\begin{proposition}
	Let $(X,\perp)$ be a finite orthogonality space with $(k+l)$ elements and rank $n+l$ fulfilling (L1), such that the intersection of all maximal orthogonal subsets has cardinality $l$. Then, there exists an orthogonality space fulfilling (L1) with $k$ elements and rank $n$.   
\end{proposition}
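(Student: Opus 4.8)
The plan is to undo the construction of the preceding proposition by deleting the $l$ elements common to all maximal orthogonal subsets. First I would describe this intersection concretely. By the earlier lemma listing the equivalent conditions for $\{a\}\c = X \setminus \{a\}$, an element lies in every maximal orthogonal subset precisely when it is orthogonal to all other elements of $X$. Write $Z$ for the set of such elements; by hypothesis $|Z| = l$. I would then put $X' := X \setminus Z$ and equip it with the induced relation $\perp' := \perp \cap (X' \times X')$, so that $|X'| = (k+l) - l = k$, and claim that $(X', \perp')$ is the required space.

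The bookkeeping device I would set up first is the identity
\[
 A\c \;=\; A^{\perp'} \,\sqcup\, Z \qquad \text{for every } A \subseteq X',
\]
a disjoint union: every $z \in Z$ is orthogonal to each $a \in A \subseteq X'$ (since $z \neq a$ and $z$ is orthogonal to everything but itself), so $Z \subseteq A\c$, while $A^{\perp'} = A\c \cap X'$ by the definition of the induced relation. From this the rank follows quickly. Each maximal orthogonal subset $D$ of $X$ contains $Z$ and, by Remark \ref{remarklemma}, has cardinality $n+l$, so $D \setminus Z$ is an orthogonal subset of $X'$ of size $n$; conversely any orthogonal subset $S$ of $X'$ gives an orthogonal $S \cup Z$ in $X$ of size $|S| + l \leq n + l$, whence $|S| \leq n$. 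A short maximality check (any $x \in X'$ orthogonal to $D \setminus Z$ is automatically orthogonal to $Z$, hence to $D$) shows $D \mapsto D \setminus Z$ and $D' \mapsto D' \cup Z$ are mutually inverse bijections between the maximal orthogonal subsets of the two spaces. Hence every maximal orthogonal subset of $X'$ has size $n$ and $(X', \perp')$ has rank $n$.

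It remains to verify (L1) for $(X', \perp')$. Take distinct $e, f \in X'$ with $e \notperp f$; since $\perp'$ is induced we also have $e \notperp f$ in $X$, so (L1) for $(X, \perp)$ supplies $g \in X$ with $g \perp e$ and $\{e,f\}\c = \{e,g\}\c$. The crucial point --- and the step I expect to be the main obstacle --- is to show that this witness $g$ survives the deletion, i.e.\ that $g \notin Z$. I would argue by contradiction: if $g \in Z$, then $g$ is orthogonal to both $e$ and $f$, so $g \in \{e,f\}\c$; yet $g \notin \{e,g\}\c$ by irreflexivity, contradicting $\{e,f\}\c = \{e,g\}\c$. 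Thus $g \in X'$, and intersecting the equality $\{e,f\}\c = \{e,g\}\c$ with $X'$ (using the displayed identity) yields $\{e,f\}^{\perp'} = \{e,g\}^{\perp'}$ with $g \perp e$, which is exactly (L1) in $(X', \perp')$.

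A final remark is that the hypotheses force $n \geq 1$ (otherwise every maximal orthogonal subset would equal $Z$, giving $X = Z$ and $k = 0$), so $X'$ is non-empty and is a genuine orthogonality space. The only genuinely delicate points are the clean split $A\c = A^{\perp'} \sqcup Z$ of complements and the observation that a linearity witness can never be a central element; once these are in hand the remaining verifications are routine.
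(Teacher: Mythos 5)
Your construction is exactly the paper's: delete the set of elements common to all maximal orthogonal subsets (equivalently, the elements orthogonal to everything else) and pass to the induced relation. Your argument is correct and in fact more complete than the paper's, which disposes of the (L1) verification by citing Remark~\ref{e-f}; your explicit check that the witness $g$ cannot lie in $Z$ (since $g \in \{e,f\}\c$ would contradict $\{e,f\}\c = \{e,g\}\c$ and irreflexivity) supplies precisely the detail that the appeal to that remark glosses over.
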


\begin{proof}
	\rev{By assumption, the set $Y = \bigcap \, \{D \colon D \in M_{\perp}\}$ has $l$ elements.} Define $X' = X \setminus Y$ and $D' = D \setminus Y$, \rev{for each $D \in M_{\perp}$}. We have an orthogonality space $(X',\perp')$ with $k$ elements and rank $n$. Also from Remark \ref{e-f}, it fulfills (L1).
\end{proof}

\begin{lemma}\label{difference-singleton}
	For an orthogonality space of finite rank  fulfilling (L1), the difference set of two maximal orthogonal subsets can not be singleton.
\end{lemma}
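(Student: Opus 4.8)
The plan is to argue by contradiction. Suppose $D_1, D_2$ are maximal orthogonal subsets with $D_1 \setminus D_2 = \{a\}$ a singleton, and set $C := D_1 \cap D_2$, so that $D_1 = C \cup \{a\}$. Since $a \in D_1 \setminus D_2$ the two subsets are distinct, and since no orthogonal set properly contains a maximal orthogonal subset, $D_2 \not\subseteq D_1$; I would fix some $b \in D_2 \setminus D_1$. Because $C \cup \{a\} = D_1$ is orthogonal and $C \cup \{b\} \subseteq D_2$ is orthogonal, every $c \in C$ satisfies $c \perp a$ and $c \perp b$, whence $C \subseteq \{a,b\}^{\perp}$.

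The first key step is to establish that $a \notperp b$. Indeed, $b \notin D_1$, so by maximality of $D_1$ the set $D_1 \cup \{b\}$ is not orthogonal; since all pairs inside $D_1$ are orthogonal, the offending non-orthogonal pair must involve $b$, and as $b$ is orthogonal to all of $C$ the only remaining possibility is $a \notperp b$. This is precisely the hypothesis needed to invoke (L1): there exists $g \perp a$ with $\{a,b\}^{\perp} = \{a,g\}^{\perp}$.

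Next I would read off three facts about $g$. First, $g \notin \{a,g\}^{\perp}$ by irreflexivity, hence $g \notin \{a,b\}^{\perp}$; as $g \perp a$, this forces $g \notperp b$. Second, from $C \subseteq \{a,b\}^{\perp} = \{a,g\}^{\perp}$ every $c \in C$ satisfies $c \perp g$, i.e.\ $g$ is orthogonal to all of $C$. Third, $g \notin D_1$: clearly $g \neq a$ (since $g \perp a$), and $g \notin C$ because each element of $C$ lies in $D_2$ and is therefore orthogonal to $b$, which would contradict $g \notperp b$. Combining these, $g$ is orthogonal to every element of $D_1 = C \cup \{a\}$ while $g \notin D_1$, so $D_1 \cup \{g\}$ is an orthogonal set strictly larger than the maximal orthogonal subset $D_1$ — the desired contradiction.

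I expect the main obstacle to be the first step, pinning down that $a \notperp b$, since this is exactly what unlocks (L1); the subsequent deductions that $g \perp C$ and $g \notin D_1$ are then routine but must be carried out carefully, as they hinge on correctly reading membership in $\{a,g\}^{\perp}$. Note that the argument uses only (L1) together with the maximality of $D_1$, so the finite-rank hypothesis (via Remark~\ref{remarklemma}) enters only to guarantee that maximal orthogonal subsets exist and are well behaved; alternatively, the contradiction can be phrased through the common cardinality $m$ of all maximal orthogonal subsets, an orthogonal set of size $m+1$ being impossible.
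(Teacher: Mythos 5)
Your proof is correct and follows essentially the same route as the paper's: apply (L1) to the non-orthogonal pair $a,b$ with $b \in D_2 \setminus D_1$, and show that the resulting $g$ is orthogonal to all of $D_1$, extending it to a strictly larger orthogonal set in contradiction with maximality. The only difference is that the paper first invokes the equal cardinality of maximal orthogonal subsets (Remark~\ref{remarklemma}) to conclude that $D_2 \setminus D_1$ is a singleton $\{b\}$, whereas you take an arbitrary $b \in D_2 \setminus D_1$ and derive $a \notperp b$ from the maximality of $D_1$ --- a small simplification that also removes any genuine dependence on the finite-rank hypothesis.
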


\begin{proof}
	Let $(X,\perp)$ be an orthogonality space that fulfills (L1) of finite rank. Suppose that we have two maximal orthogonal subsets $D_1 , D_2 \in \, M_\perp$ such that $D_1 \setminus D_2 = \{a\}$. We know that:
	\begin{align*}
		D_1 & = (D_1 \cap D_2) \cup (D_1 \setminus D_2) \, , \\
		D_2 & = (D_1 \cap D_2) \cup (D_2 \setminus D_1) \, .
	\end{align*}
	Since $D_1$ and $D_2$ have the same cardinality, there exist $b \in X$ such that $D_2 \setminus D_1 = \{b\}$. 
	If $a \perp b$, then $a \in D_2 ^\perp$ which is a contradiction. So we must have $a \notperp b$.
	
	Now, by using (L1) condition, there exist $c \in \{a\}^{\perp}$ such that $\{ a, b \}\c = \{ a, c \}\c$. Then, we have $D_1 \cap D_2 \subseteq \{ a, b \}\c = \{ a, c \}\c$ that yields $c \in D_1\c$. This yields a maximal orthogonal subset $D_1 \cup \{c\}$ which is a contradiction.
\end{proof}


\section{Rank and distance in  orthogonality spaces fulfilling (L1)}

In this section we investigate graph-theoretic structure of orthogonality spaces fulfilling (L1). In particular, we describe 
how linear orthogonality spaces of rank 2 look like up to isomorphism and we point out that they are not connected as graphs.  
Moreover, finite linear orthogonality spaces  of rank 2  have always even cardinality of at least four.
In contrast, we prove that orthogonality spaces of rank at least 3  fulfilling (L1) are always connected and that 
there are no finite  linear orthogonality spaces of rank at least 3.

For later considerations, we introduce a further, particularly simple example which describes in fact 
a balanced biregular bipartite graph of degree one which is not connected.

\begin{example}\label{exrank2L1}
Let $A$ and $B$ be sets such that $|A|=|B|$ and $A\cap B=\emptyset$. Then there is a bijection $\varphi\colon A\to B$. 
Let us denote by ${\mathbf 2}(A,B,\varphi)$ the orthogonality space: 
$$(A\cup B, \{(a, \varphi(a)) \mid a\in A\}\cup \{(\varphi(a),a) \mid a\in A\}).$$ Evidently, ${\mathbf 2}(A,B,\varphi)$ has rank 2 and any maximal subset of mutually orthogonal elements 
of $A\cup B$ has exactly 2 elements.
\end{example}

From now on, we fix the notation ${\mathbf 2}(A,B,\varphi)$ for the orthogonality space defined above.

\begin{proposition}\label{rank2L1}
Let $(X,\perp)$ be an orthogonality space of rank 2 fulfilling (L1). Then $X={\mathbf 2}(A,B,\varphi)$ for some subsets 
$A, B\subseteq X$ and a bijection $\varphi\colon A\to B$. Moreover, any  orthogonality space  $(X,\perp)$ 
of the above form has rank 2 and fulfills (L1). 
\end{proposition}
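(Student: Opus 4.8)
The plan is to prove the two directions of this characterization separately, and the easier direction is the "moreover" part, so I would dispatch it first. Given any orthogonality space of the form $\mathbf 2(A,B,\varphi)$, every element is orthogonal to exactly one other element (its partner under $\varphi$ or $\varphi^{-1}$), so the maximal orthogonal subsets are precisely the pairs $\{a,\varphi(a)\}$, giving rank $2$. To verify (L1) and (L2), take distinct $e,f$. If $e\notperp f$, the orthogonal complement $\{e,f\}\c$ equals $\{e\}\c\cap\{f\}\c$; since $\{e\}\c$ is a singleton (the partner of $e$) and likewise for $f$, and since $e\notperp f$ means they are not partners, I would check that the witness $g$ required by (L1) can be taken to be the partner of $e$ — one computes directly that $\{e,g\}\c=\{e,f\}\c$ holds because both sides reduce to the same small set. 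The symmetric computation handles (L2). These are short finite verifications.

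The substantive direction is the converse: starting from an arbitrary $(X,\perp)$ of rank $2$ satisfying (L1), I must recover the bipartite structure. First I would observe that since the rank is $2$, no three elements are mutually orthogonal, so for each $e\in X$ the set $\{e\}\c$ is an \emph{antichain} of pairwise non-orthogonal elements. The key claim I would aim for is that (L1) forces each $\{e\}\c$ to be a singleton: if $e$ had two distinct orthogonal partners $f_1,f_2$, then $f_1\notperp f_2$ (by rank $2$), and I would apply (L1) to the non-orthogonal pair $f_1,f_2$ to derive a contradiction with rank $2$, or alternatively use Lemma~\ref{str-irr} (strong irredundancy) together with a counting argument on the orthoclosed sets. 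Once every element has a unique orthogonal partner, the relation $\perp$ defines an involution-free perfect matching on the elements that are not isolated; I would then define $A$ and $B$ by choosing one element from each orthogonal pair to lie in $A$ and its partner in $B$, and let $\varphi$ send each chosen element to its partner.

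The main obstacle I anticipate is the step showing $|\{e\}\c|=1$ for every $e$ — equivalently, that the underlying graph is a disjoint union of single edges (a perfect matching). The naive worry is an element $e$ orthogonal to several mutually non-orthogonal elements $f_1,\dots,f_k$. I would resolve this by exploiting that rank $2$ makes $\{e,f_i\}$ a maximal orthogonal subset for each $i$, and Remark~\ref{remarklemma} guarantees all maximal orthogonal subsets have cardinality $2$; combining this with Lemma~\ref{difference-singleton} (the difference of two maximal orthogonal subsets cannot be a singleton) applied to $\{e,f_1\}$ and $\{e,f_2\}$ yields a contradiction, since these two maximal orthogonal subsets differ exactly in the singleton $\{f_1\}$ versus $\{f_2\}$. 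This is the cleanest route, as Lemma~\ref{difference-singleton} is tailor-made for it.

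A final bookkeeping step is to confirm that $X$ contains no isolated vertex, since $\mathbf 2(A,B,\varphi)$ has none. If $e$ were isolated then $\{e\}\c=\emptyset$, forcing $\{e\}\cc=X$, which would make $e$ a trivial element; I would argue from irredundancy (Lemma on (L1)$\Rightarrow$irredundant) that this cannot coexist with (L1) in a non-trivial space, or simply absorb isolated vertices into the statement by noting they cannot arise once every complement is a singleton. Assembling the perfect matching into the bijection $\varphi\colon A\to B$ then gives $X=\mathbf 2(A,B,\varphi)$ exactly, completing the proof.
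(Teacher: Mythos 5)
Your overall strategy coincides with the paper's: reduce everything to showing that $|\{e\}\c|=1$ for every $e\in X$ and then read the matching off as the bijection $\varphi$. The one substantive step where you diverge is how you exclude an element $e$ with two partners $f_1,f_2$: the paper applies (L1) directly to the non-orthogonal pair $f_1\notperp f_2$ to manufacture a third mutually orthogonal element and contradict rank $2$, whereas your preferred route invokes Remark~\ref{remarklemma} (all maximal orthogonal subsets have cardinality equal to the rank, so $\{e,f_1\}$ and $\{e,f_2\}$ are both maximal) together with Lemma~\ref{difference-singleton}. Both are valid; the paper's version is self-contained and elementary, while yours is shorter but leans on two prior results whose proofs themselves use (L1), so it buys brevity at the cost of dependency. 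For the isolated-vertex case the paper again applies (L1) directly (a non-neighbour $c$ of $a$ exists since $|X|\ge 2$, and the witness $g\perp a$ contradicts $\{a\}\c=\emptyset$); your appeal to plain irredundancy is not quite enough on its own (one isolated vertex does not violate it), but strong irredundancy (Lemma~\ref{str-irr}) does the job, since $\{e\}\c=\emptyset\subseteq\{b\}\c$ would force $X=\{e\}$, contradicting rank $2$. One genuine slip, though harmless for this proposition: your parenthetical claim that the ``symmetric computation handles (L2)'' is false when $|A|=1$, i.e.\ $|X|=2$, where there is no admissible witness $g\notin\{e,f\}$ at all; this is precisely why the paper separates the linear case into Proposition~\ref{rank2Linear} with the extra hypothesis $|A|\ge 2$. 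Since the present statement only concerns (L1), your proof is correct once the (L2) remark is dropped.
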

\begin{proof} First, we check that $|\{a\}^{\bot}|=1$ for all $a\in X$. Let $a\in X$. Assume first that 
$|\{a\}^{\bot}|=0$. Since $|X|\geq 2$ there is \rev{a} $c\in X$ such that $c\not\perp a$. From (L1) we obtain that there is \rev{a} $d\in X$ such that 
$\{a,c\}^{\bot}=\{a,d\}^{\bot}$ and $a\perp d$, a contradiction with $|\{a\}^{\bot}|=0$. 
Assume now that $|\{a\}^{\bot}|\geq 2$. Then there are $b, c\in X$ such that $a\perp b$ and $a\perp c$. 
Since $X$ has  rank 2 we obtain that $b\not\perp c$.  From (L1) we obtain that there \rev{exists} $d\in X$ such that  
$a\in \{b,c\}^{\bot}=\{b,d\}^{\bot}$ and $b\perp d$. Hence $\{a, b, d\}$ is an orthogonal set, i.e., 
$X$ has  rank at least 3, a contradiction. 

It follows that there are subsets $A, B\subseteq X$,  $A\cap B=\emptyset$ and a bijection $\varphi\colon A\to B$ 
such that \rev{$x \perp y$} if and only if $x\in A$ and $y=\varphi(x)$ or  $y\in A$ and $x=\varphi(y)$. 

Hence $(X,\perp)={\mathbf 2}(A,B,\varphi)$. 

Now, let  $(X,\perp)={\mathbf 2}(A,B,\varphi)$ for some 
disjoint sets $A$ and $B$, and a bijection $\varphi\colon A\to B$.
Assume first that $|A|=1$. Then any two elements of $X$ are orthogonal, i.e.,  $(X,\perp)$ fulfills (L1) and it has rank 2. 
Assume now $|A|\geq 2$ and let $a\not\perp b$. Then $ \{a,b\}^{\bot}=\emptyset$ and there exists 
$c\in X$ such that $a\perp c$ and $ \{a,c\}^{\bot}=\emptyset$. Again,  $(X,\perp)$ fulfills (L1) and it has \mbox{rank 2}. 
\end{proof}

\begin{remark} Let $n$ be a natural number. Recall that $\text{MO}(n)$ is the horizontal sum of $n$ copies of the four 
element Boolean algebra (see \cite{Greechie}). Let $(X,\perp)$ be an orthogonality space 
such that $\mathcal{C}(X,\perp)\cong \text{MO}(n)$ as an  ortholattice. Then $(X,\perp)$ has 
rank 2,  fulfills (L1) and  $|X|=2n$. 
\end{remark}

\begin{corollary}\label{rank2numberl1} Let $X$ be a finite set such that $|X|=2n$ for some natural number $n$. 
Then the number of all orthogonality spaces on $X$ of rank 2  and  fulfilling (L1) is 
$$(2n-1) \cdot (2n-3) \,\, {\cdots} \,\, 3 \cdot 1=\frac{(2n)!}{n! 2^{n}}$$
\end{corollary}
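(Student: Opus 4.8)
The plan is to count structures of the form $\mathbf{2}(A,B,\varphi)$ on a fixed set $X$ with $|X|=2n$, since by Proposition~\ref{rank2L1} these are exactly the orthogonality spaces of rank $2$ fulfilling (L1). The key observation is that each such space is nothing but a \emph{perfect matching} on $X$: the orthogonality relation pairs up each element with exactly one partner (recall that the proof of Proposition~\ref{rank2L1} established $|\{a\}^\perp|=1$ for all $a$), and this pairing is symmetric. Thus the enumeration reduces to counting perfect matchings on a $2n$-element set.

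First I would make the bijection precise: given $\mathbf{2}(A,B,\varphi)$, the edges $\{a,\varphi(a)\}$ for $a\in A$ partition $X$ into $n$ disjoint two-element blocks. Conversely, any partition of $X$ into $n$ pairs yields a rank-$2$ (L1)-space by declaring two elements orthogonal iff they lie in the same block; choosing $A$ to contain one representative from each pair and $B$ the other, with $\varphi$ the within-pair map, recovers the form $\mathbf{2}(A,B,\varphi)$. The only subtlety is that the data $(A,B,\varphi)$ is not unique for a given space — swapping which element of a pair goes into $A$ versus $B$ gives the same orthogonality relation — so I must count \emph{unordered} matchings, not triples. Framing the correspondence directly in terms of partitions into pairs sidesteps this overcounting cleanly.

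Next I would compute the number of perfect matchings on $2n$ labelled points, which is the standard double factorial. The cleanest derivation: order the $2n$ elements, repeatedly pick the partner of the first unmatched element. The first element has $2n-1$ choices of partner; after removing that pair, the next unmatched element has $2n-3$ choices; continuing gives the product
\begin{align*}
(2n-1)(2n-3)\cdots 3\cdot 1 = (2n-1)!!.
\end{align*}
To obtain the closed form, I would note that $(2n)! = (2n)(2n-1)(2n-2)\cdots 2\cdot 1$ splits into the even factors $2\cdot 4\cdots(2n) = 2^n\, n!$ times the odd factors $(2n-1)!!$, giving $(2n-1)!! = (2n)!/(n!\,2^n)$.

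The main obstacle, such as it is, is purely bookkeeping: ensuring the correspondence between rank-$2$ (L1)-spaces on $X$ and perfect matchings is genuinely a bijection rather than a surjection with multiplicity. I expect this to be routine once the non-uniqueness of $(A,B,\varphi)$ is handled by phrasing everything in terms of the induced pairing, which is an intrinsic invariant of the space. No deep argument is required beyond Proposition~\ref{rank2L1} and the elementary counting above.
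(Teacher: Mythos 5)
Your proposal is correct and follows essentially the same route as the paper: both reduce the count to perfect matchings on $X$ via Proposition~\ref{rank2L1} and then enumerate them by the sequential pairing argument yielding $(2n-1)(2n-3)\cdots 3\cdot 1$. Your explicit handling of the non-uniqueness of the representation $(A,B,\varphi)$ and the derivation of the closed form $(2n)!/(n!\,2^{n})$ are welcome refinements, but the argument is the same.
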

\begin{proof}
Let $|X|=2n$. Due to Proposition \ref{rank2L1}, any orthogonality space $(X,\perp)$ 
that fulfills (L1) of rank 2 must be a partition of $X$ to $n$.

In other words, $\perp$ has $n$ maximal orthogonal subsets in which any of them has 2 
elements and they are mutually disjoint. For any $a_1 \in X$, we can write $2n-1$ different $a'_1 \in X$ such that $a_1 \perp a ' _1$ ($a_1 \neq a'_1$ because of the antisymmetry). It follows that, for any  $a_2 \in X \setminus \{a_1,a'_1\}$ we can write $2n-3$ different possible $a'_2 \in X$ such that $a_2 \perp a' _2$. By iteration, at the end, for $a_{n} \in X \setminus \{a_1,a'_1, a_2,a'_2 \cdots, a_{n-1},a'_{n-1}\}$, we have only one possible $a'_n$ such that $a_n \perp a'_n$. So 
there are exactly  $(2n-1)\cdot(2n-3)  \,\, {\cdots} \,\,  3\cdot1$ 
different orthogonality spaces $(X,\perp)$.

\end{proof}

\begin{remark}
	Our \verb|Python| computations reveal that, for any orthogonality space $(X,\perp)$ 
	of rank $3$ that fulfills (L1) where $4\leq|X|\leq 10$, the following must hold:
\begin{itemize}
	\item $|X|$ is odd.
	\item $M_{\perp}$ is uniquely determined up to isomorphism. 
	\item We always have exactly one (fixed) element $a \in X$ such that $a^{\perp} = X \setminus \{a\}$.
\end{itemize}	
So we observe that, we only have the following four orthogonality spaces of rank $3$ that fulfill (L1) (for $|X|\leq 10$):

\medskip

\begin{tcolorbox}[sidebyside, blanker, halign=center, halign lower=center]
	\begin{tikzpicture}[scale=0.8,every node/.style={draw=black,scale=0.5,circle,fill=brightgreen}]
		\node (0) at (0,0) {0};
		\node (1) at (2,0) {1};
		\node (2) at (1,1.5) {2};
		\draw[line width=0.6mm] (0) to (1);
		\draw[line width=0.6mm] (0) to (2);
		\draw[line width=0.6mm] (2) to (1);
	\end{tikzpicture} 

	\tcblower

	\begin{tikzpicture}[scale=1.2,every node/.style={draw=black,scale=0.5,circle,fill=brightgreen}]
		\node (0) at (0,0.3) {0};
		\node (2) at (0,1.5) {2};
		\node (5) at (1,1) {5};
		\node (1) at (2,0.3) {1};
		\node (3) at (2,1.5) {3};
		\draw[line width=0.6mm] (0) to (2);
		\draw[line width=0.6mm] (0) to (5);
		\draw[line width=0.6mm] (2) to (5);
		\draw[line width=0.6mm] (1) to (3);
		\draw[line width=0.6mm] (1) to (5);
		\draw[line width=0.6mm] (3) to (5);
	\end{tikzpicture}
\end{tcolorbox}

\medskip

\begin{tcolorbox}[sidebyside, blanker, halign=center, halign lower=center]
	
	\begin{tikzpicture}[scale=1.3,every node/.style={draw=black,scale=0.5,circle,fill=brightgreen}]
		\node (0) at (0.2,0.3) {0};
		\node (2) at (-0.1,1) {2};
		\node (5) at (1,1) {5};
		\node (1) at (1.8,0.3) {1};
		\node (3) at (2.1,1) {3};
		\node (6) at (0.6,2) {6};
		\node (7) at (1.4,2) {7};
		\draw[line width=0.6mm] (0) to (2);
		\draw[line width=0.6mm] (0) to (5);
		\draw[line width=0.6mm] (2) to (5);
		\draw[line width=0.6mm] (1) to (3);
		\draw[line width=0.6mm] (1) to (5);
		\draw[line width=0.6mm] (3) to (5);
		\draw[line width=0.6mm] (6) to (7);
		\draw[line width=0.6mm] (6) to (5);
		\draw[line width=0.6mm] (7) to (5);
	\end{tikzpicture}
	
	\tcblower
	
	\begin{tikzpicture}[scale=1.3,every node/.style={draw=black,scale=0.5,circle,fill=brightgreen}]
	\node (0) at (0,0.5) {0};
	\node (2) at (0,1.3) {2};
	\node (5) at (1,1) {5};
	\node (1) at (2,0.5) {1};
	\node (3) at (2,1.3) {3};
	\node (6) at (0.6,2) {6};
	\node (7) at (1.4,2) {7};
	\node (8) at (1.4,0) {8};
	\node (9) at (0.6,0) {9};
	\draw[line width=0.6mm] (0) to (2);
	\draw[line width=0.6mm] (0) to (5);
	\draw[line width=0.6mm] (2) to (5);
	\draw[line width=0.6mm] (1) to (3);
	\draw[line width=0.6mm] (1) to (5);
	\draw[line width=0.6mm] (3) to (5);
	\draw[line width=0.6mm] (6) to (7);
	\draw[line width=0.6mm] (6) to (5);
	\draw[line width=0.6mm] (7) to (5);
	\draw[line width=0.6mm] (8) to (9);
	\draw[line width=0.6mm] (8) to (5);
	\draw[line width=0.6mm] (9) to (5);
	\end{tikzpicture}
\end{tcolorbox}

\end{remark}

\begin{proposition}\label{rank3finite} 
	Let $A,B$ be two disjoint sets with a bijection $\varphi \colon A \to B$, and \rev{let $x$ be an element not contained in $A$ or $B$.} We denote the orthogonality space $(X,\perp) = {\mathbf 3}(A,B,\varphi)$ in which the maximal orthogonal subsets are $\{x,a,\varphi(a)\}$ for all $a \in A$. Then ${\mathbf 3}(A,B,\varphi)$ fulfills (L1), is not linear and has rank 3.
\end{proposition}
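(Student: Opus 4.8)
The plan is to make the orthogonality relation fully explicit, then read off the rank, the validity of (L1), and the failure of linearity in turn. First I would record the underlying set $X = A \cup B \cup \{x\}$ and observe that, since the only maximal orthogonal subsets are the triples $\{x, a, \varphi(a)\}$ with $a \in A$, two distinct elements are orthogonal exactly when they lie in a common such triple. Concretely this means $x \perp y$ for every $y \in A \cup B$, that $a \perp \varphi(a)$ for every $a \in A$, and that no other orthogonalities occur; equivalently, $\mathbf{3}(A,B,\varphi)$ is a windmill graph whose centre $x$ is adjacent to everything while the rest is the perfect matching induced by $\varphi$. From this I would compute the singleton complements once and for all, namely
$$\{x\}^{\perp} = A \cup B = X \setminus \{x\}, \qquad \{a\}^{\perp} = \{x, \varphi(a)\}, \qquad \{\varphi(a)\}^{\perp} = \{x, a\}$$
for $a \in A$; I assume $A \neq \emptyset$ so that the space is non-trivial. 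These three formulas drive the whole argument.

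For the rank, each maximal orthogonal subset $\{x, a, \varphi(a)\}$ already has three elements, so the rank is at least $3$. For the upper bound I would note that any two elements drawn from distinct triples are non-orthogonal, so no orthogonal set can meet two different triples; hence the rank is exactly $3$. For (L1) I would run through the distinct non-orthogonal pairs $e \notperp f$, of which there are only three kinds: $e, f$ both in $A$, both in $B$, or one in each of $A$ and $B$ but not matched by $\varphi$. In every such case the complement formulas force $\{e, f\}^{\perp} = \{x\}$, since the unique common neighbour of two unmatched vertices is the centre. The witness is then uniform: take $g$ to be the $\varphi$-partner of $e$ (that is, $\varphi(e)$ if $e \in A$, or $\varphi^{-1}(e)$ if $e \in B$), so that $g \perp e$ and $\{e, g\}^{\perp} = \{e\}^{\perp} \cap \{g\}^{\perp} = \{x\} = \{e, f\}^{\perp}$. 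The remaining possibility $e = x$ need not be examined, as $x$ is orthogonal to every other element and thus admits no $f$ with $x \notperp f$; this is precisely the situation of Remark \ref{e-f}. Hence (L1) holds.

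Finally, non-linearity is immediate from the first formula above: since $\{x\}^{\perp} = X \setminus \{x\}$ and the rank is finite, Corollary \ref{centralnotlinear} shows that $\mathbf{3}(A,B,\varphi)$ is not linear. Equivalently, (L2) fails outright at $e = x$, which has no non-orthogonal partner to serve as $g$. The computations here are routine; the only step demanding genuine care is the (L1) case analysis, where I must confirm that the three families of non-orthogonal pairs are exhausted and that the single witness $g = \varphi^{\pm 1}(e)$ really reproduces the complement $\{x\}$ each time. The degenerate case $|A| = 1$, in which $X$ is a single triangle $K_3$, is worth a quick sanity check but is subsumed by the same argument.
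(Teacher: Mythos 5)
Your proposal is correct and follows essentially the same route as the paper's own proof: the witness for (L1) is the $\varphi$-partner of $e$ with common complement $\{x\}$, and non-linearity follows from $\{x\}^{\perp}=X\setminus\{x\}$ via Corollary~\ref{centralnotlinear}. You simply spell out the case analysis and the rank computation that the paper leaves implicit (and you rightly flag the implicit assumption $A\neq\emptyset$).
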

\begin{proof}
	Let $a \not\perp b$. It means $a,b$ are both different than $x$. Then we have $a \perp \varphi (a)$ such that $x = \{a,b\}^{\perp} = \{a,\varphi(a)\}^{\perp}$. From Corollary \ref{centralnotlinear} and the fact that 
	${x}^{\perp}=X\setminus \{x\}$ we obtain that ${\mathbf 3}(A,B,\varphi)$  is not linear.
\end{proof}

\begin{theorem}\label{open2}
	Any finite orthogonality space $(X,\perp)$ of rank 3 
	fulfilling (L1) is \rev{of the form} ${\mathbf 3}(A,B,\varphi)$.
\end{theorem}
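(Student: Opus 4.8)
The plan is to show that the underlying graph of $(X,\perp)$ is a \emph{windmill} (friendship) graph, i.e.\ a family of triangles glued at a single common vertex, since this is precisely the graph-theoretic description of ${\mathbf 3}(A,B,\varphi)$ (the common vertex is $x$, and the remaining vertices split into the matched pairs $\{a,\varphi(a)\}$). Two facts proved earlier form the backbone. First, by Remark \ref{remarklemma} every maximal orthogonal subset has cardinality equal to the rank, namely $3$; hence the maximal orthogonal subsets are exactly the $3$-cliques (triangles) of the graph, and conversely every edge, being an orthogonal pair, extends to such a triangle. Second, by Lemma \ref{difference-singleton} two distinct maximal orthogonal subsets can never differ in a single element, so any two distinct triangles meet in at most one vertex.

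The key step I would isolate is the claim that \emph{every two distinct vertices of $X$ have exactly one common neighbour}, that is, $|\{e,f\}\c|=1$ for all distinct $e,f\in X$. For an orthogonal (adjacent) pair $e\perp f$ the set $\{e,f\}\c$ is nonempty, because the edge $\{e,f\}$ extends to a triangle; and it cannot contain two distinct elements $h_1,h_2$, for then $\{e,f,h_1\}$ and $\{e,f,h_2\}$ would be distinct maximal orthogonal subsets with singleton difference $\{h_1\}$, contradicting Lemma \ref{difference-singleton}. Hence $|\{e,f\}\c|=1$ for adjacent pairs. For a non-orthogonal pair $e\notperp f$ I would invoke (L1): it produces $g\perp e$ with $\{e,f\}\c=\{e,g\}\c$, and since $e\perp g$ the previous case gives $|\{e,g\}\c|=1$, whence $|\{e,f\}\c|=1$ as well. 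This disposes of both cases.

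With the unique-common-neighbour property in hand, the statement follows from the classical Friendship Theorem of Erd\H{o}s, R\'enyi and S\'os: a finite graph in which any two distinct vertices have exactly one common neighbour possesses a vertex adjacent to all others and is a windmill graph. Translating back, this universal vertex is the element $x$ with $x\c=X\setminus\{x\}$, the triangles through $x$ are the maximal orthogonal subsets, and the non-central vertices are partitioned by these triangles into matched pairs, giving $(X,\perp)={\mathbf 3}(A,B,\varphi)$. I expect the genuine obstacle to be the passage to the universal vertex $x$: establishing the unique-common-neighbour property from (L1) and Lemma \ref{difference-singleton} is short, but deducing the global windmill structure from that local condition is exactly the nontrivial content supplied by the Friendship Theorem (whose own proof rests on a regularity-plus-eigenvalue argument). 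A self-contained treatment would instead need to derive the universal vertex directly, exploiting finiteness together with the facts that every edge lies in a unique triangle and distinct triangles meet in at most one point.
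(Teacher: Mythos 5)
Your argument is correct, but it takes a genuinely different route from the paper's proof. You verify the friendship condition $|\{e,f\}^{\perp}|=1$ for all distinct $e,f\in X$ --- existence of a common neighbour for an orthogonal pair from Remark~\ref{remarklemma} (every maximal orthogonal subset has exactly three elements, so every edge lies in a triangle), uniqueness from Lemma~\ref{difference-singleton}, and the non-orthogonal case by transferring the count through (L1) --- and then invoke the Erd\H{o}s--R\'enyi--S\'os Friendship Theorem to obtain the windmill structure, which is precisely ${\mathbf 3}(A,B,\varphi)$. All of these steps are sound. The paper proceeds instead by a dichotomy on irreducibility: if $X$ is reducible it exhibits the universal vertex $x$ directly and applies the rank-$2$ classification of Proposition~\ref{rank2L1} to $\{x\}^{\perp}$; if $X$ is irreducible it combines Remark~\ref{remarklemma} (so $\mathcal{C}(X,\perp)$ is a modular ortholattice of length $3$) with Greechie and Herman's classification of finite irreducible orthomodular lattices to force $\mathcal{C}(X,\perp)\cong\mathrm{MO}(n)$, which has rank $2$, a contradiction. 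Each proof delegates its one genuinely hard global step to a classical external theorem --- yours to the Friendship Theorem (whose proof needs the regularity-plus-eigenvalue argument you mention), the paper's to Greechie--Herman --- so neither is self-contained. Your version is purely combinatorial, produces the universal vertex and the odd cardinality $|X|=2|A|+1$ explicitly (recovering the paper's computational observations), and needs no lattice theory beyond Remark~\ref{remarklemma}; the paper's version reuses Proposition~\ref{rank2L1} and stays within the ortholattice framework that drives the rest of the paper.
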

\begin{proof} If $|X|=3$ then $M_{\bot}=\{X\}$ and the statement is valid. Assume now that $|X|>3$. 
Suppose first that $X$ is not irreducible. Then there are disjoint non-empty orthoclosed subsets $U$ and $V$ such that 
$U\cup V=X$ and $U=V^{\bot}$ and $V=U^{\bot}$. We can assume that $U=\{x\}$ for a suitable element $x\in X$ and 
$V=\{x\}^{\bot}$ (otherwise we interchange $U$ with $V$). Since $V$ is an orthogonality space of rank 2 fulfilling (L1) 
there are subsets $A, B\subseteq V$ and a bijection $\varphi\colon A\to B$ such that $V={\mathbf 2}(A,B,\varphi)$. 
Since $V=\{x\}^{\bot}$ we obtain that $X={\mathbf 3}(A,B,\varphi)$.

Suppose now that $X$ is irreducible. Since $|\mathcal{C}(X,\perp)|\geq 4$, \rev{it cannot be isomorphic to a point or two element chain. Consequently,} following from \cite[Theorem 16]{Greechie} and \mbox{Remark \ref{remarklemma}} \rev{we conclude that}  $\mathcal{C}(X,\perp)\cong \text{MO}(n)$. Hence $(X,\perp)$ is of \mbox{rank 2}, a contradiction.
\end{proof}

\begin{proposition}\label{rank2Linear}
Let $(X,\perp)$ be a linear orthogonality space of rank 2. Then $(X,\perp)={\mathbf 2}(A,B,\varphi)$ for some 
disjoint sets $A$ and $B$,  $|A|\geq 2$ and a bijection $\varphi\colon A\to B$. 
Moreover, any  orthogonality space  $(X,\perp)$ 
of the above form has rank 2 and is linear. 
\end{proposition}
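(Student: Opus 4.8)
The plan is to lean entirely on Proposition \ref{rank2L1}, since linearity is by definition the conjunction of (L1) and (L2), so a linear orthogonality space in particular fulfils (L1). Thus for the first assertion I would start from a linear $(X,\perp)$ of rank $2$, invoke Proposition \ref{rank2L1} to conclude immediately that $(X,\perp)={\mathbf 2}(A,B,\varphi)$ for some disjoint $A,B\subseteq X$ and a bijection $\varphi\colon A\to B$, and then the only remaining work is to upgrade the conclusion from the bare form to $|A|\geq 2$. Recall from the proof of Proposition \ref{rank2L1} that in such a space $|\{a\}\c|=1$ for every $a\in X$, i.e.\ each element has exactly one orthogonal partner.

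To force $|A|\geq 2$ I would argue by contradiction using (L2). If $|A|=1$, then $X=\{a,\varphi(a)\}$ consists of just two mutually orthogonal elements. Applying (L2) to the orthogonal pair $e=a$, $f=\varphi(a)$ would require a $g\notperp e$ with $\{e,f\}\c=\{e,g\}\c$, and the definition of linearity already notes that such a $g$ must be distinct from both $e$ and $f$. Since $X$ has only the two elements $e,f$, no such $g$ exists, so (L2) fails; hence a linear space cannot have $|A|=1$, and $|A|\geq 2$.

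For the converse I would take $(X,\perp)={\mathbf 2}(A,B,\varphi)$ with $|A|\geq 2$. Rank $2$ and (L1) are handed to me directly by Proposition \ref{rank2L1}, so it remains only to verify (L2). Suppose $e\perp f$; then $\{e,f\}=\{a,\varphi(a)\}$ for some $a\in A$, and because each element has a unique orthogonal partner, nothing is orthogonal to both $e$ and $f$, giving $\{e,f\}\c=\emptyset$. Since $|X|=2|A|\geq 4$, I can pick $g\in X$ distinct from $e$ and from the unique orthogonal partner of $e$; such a $g$ satisfies $g\notperp e$, and as the only element orthogonal to $e$ is that omitted partner, again $\{e,g\}\c=\emptyset=\{e,f\}\c$. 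This establishes (L2), so together with (L1) the space is linear.

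I do not expect a genuine obstacle here, as the statement is essentially a refinement of Proposition \ref{rank2L1}: the whole content is isolating the single extra constraint $|A|\geq 2$ and checking (L2). The one point requiring care is the ``distinctness'' clause in the definition of linearity — the element $g$ witnessing (L2) must differ from both $e$ and $f$ — since this is exactly what rules out $|A|=1$ and, conversely, is exactly what the cardinality bound $|X|\geq 4$ guarantees in the converse direction.
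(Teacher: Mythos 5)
Your proposal is correct and follows essentially the same route as the paper: both reduce to Proposition \ref{rank2L1} for the form and the (L1)/rank claims, rule out $|A|=1$ by observing that (L2) would require a witness $g$ outside the two-element set $\{a,\varphi(a)\}$, and verify (L2) for $|A|\geq 2$ by picking any $g\in X\setminus\{e,f\}$ and noting $\{e,f\}\c=\emptyset=\{e,g\}\c$. No gaps.
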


\begin{proof} From Proposition \ref{rank2L1} we know that 
$(X,\perp)={\mathbf 2}(A,B,\varphi)$ for some 
disjoint sets $A$ and $B$,   and a bijection $\varphi\colon A\to B$. 

 Assume that $|A|=1$. We know   
$a\perp \varphi(a)=b$. Since any two different elements of $X$ are orthogonal there is no element 
$c\in X\setminus \{a, b\}$ such that 
$\{a,b\}^{\bot}=\{a,c\}^{\bot}$ and $a\not\perp c$. 

Conversely, let  $(X,\perp)={\mathbf 2}(A,B,\varphi)$ for some 
disjoint sets $A$ and $B$,  $|A|\geq 2$ and a bijection $\varphi\colon A\to B$. By 
 Proposition \ref{rank2L1} it remains to show that 
$(X,\perp)$ fulfills (L2). Let $a\perp b$. Then there exists $c\in X\setminus \{a, b\}$. Clearly, $a\not\perp c$ and 
$ \{a,b\}^{\bot}=\emptyset= \{a,c\}^{\bot}$.
\end{proof}

\begin{corollary}\label{rank2Linearnotconn}
Any linear orthogonality space of rank 2 is not connected. 
\end{corollary}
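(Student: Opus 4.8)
The plan is to reduce the statement immediately to the structural classification already established in Proposition \ref{rank2Linear}. That proposition tells us that every linear orthogonality space of rank $2$ is, up to the identification made in Example \ref{exrank2L1}, of the form $\mathbf{2}(A,B,\varphi)$ for disjoint sets $A,B$ with $|A|\geq 2$ and a bijection $\varphi\colon A\to B$. So the whole argument amounts to reading off the connectivity of this concrete structure; there is essentially no hard part, and the only thing to be careful about is to use the condition $|A|\geq 2$, which is exactly what rules out the single-edge (hence connected) case.

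First I would unwind the definition of the orthogonality relation of $\mathbf{2}(A,B,\varphi)$: for each $a\in A$ the only element orthogonal to $a$ is $\varphi(a)$, and dually the only element orthogonal to $\varphi(a)$ is $a$. Hence every vertex of the graph has degree exactly $1$, so the graph is a disjoint union of the $|A|$ edges $\{a,\varphi(a)\}$, $a\in A$; since $\varphi$ is a bijection onto $B$ and $A\cap B=\emptyset$, these edges are pairwise disjoint.

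Next I would exhibit two vertices lying in distinct connected components. Because $|A|\geq 2$, I can pick distinct $a_1,a_2\in A$. The connected component of $a_1$ is just $\{a_1,\varphi(a_1)\}$, as $a_1$'s only neighbour $\varphi(a_1)$ in turn has $a_1$ as its only neighbour. Since $a_2\neq a_1$ and $a_2\in A$ while $\varphi(a_1)\in B$ with $A\cap B=\emptyset$, the element $a_2$ does not lie in $\{a_1,\varphi(a_1)\}$, so $a_2$ belongs to a different component. Equivalently, $d(a_1,a_2)=\infty$. Therefore $(X,\perp)$ has at least two connected components and cannot be connected, which is the claim. As noted, the only genuinely load-bearing input is the inequality $|A|\geq 2$ supplied by Proposition \ref{rank2Linear}, so no separate obstacle arises.
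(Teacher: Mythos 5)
Your proof is correct and follows exactly the route the paper intends: the corollary is deduced from Proposition \ref{rank2Linear} by observing that $\mathbf{2}(A,B,\varphi)$ with $|A|\geq 2$ is a disjoint union of at least two edges. The paper leaves this deduction implicit; you have simply spelled it out.
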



\begin{remark} Recall that Corollary \ref{rank2numberl1} \rev{for linear orthogonality spaces} is related to the results of 
Eckmann and  Zabey \cite{EZ}. If we look e.g. on a finite set $X$ with $|X|=16$ then there is no finite field $F$ and 
a vector space $V$ of dimension $2$ such that the lattice of all subspaces of $V$ is isomorphic to 
${\mathcal C}(X, \perp)$ for some linear orthogonality space on $X$ of rank 2. The reason is that this isomorphism 
would require $16=p^{d}+1$ for some prime number $p$ which is impossible.
\end{remark}

\begin{proposition}\label{rank3L1}
Let $(X,\perp)$ be an orthogonality space of rank at least 3 fulfilling (L1). Then $X$ is connected and its diameter is at most 2. 
Moreover, 
\begin{enumerate}[{\rm(D3)}]
\item $a\not\perp b$  if and only if $d(a,b)=2$.
\end{enumerate}
\end{proposition}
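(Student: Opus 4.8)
The plan is to reduce all three assertions to a single combinatorial fact: for distinct $a,b\in X$ with $a\notperp b$, there exists an element $c$ with $c\perp a$ and $c\perp b$. Granting this, everything follows quickly. For any two distinct vertices $a,b$, either $a\perp b$, in which case $d(a,b)=1$ by (D1), or $a\notperp b$, in which case such a $c$ yields a path $a-c-b$ of length $2$, so $d(a,b)\le 2$; since $a\neq b$ and $a\notperp b$ we have $d(a,b)\neq 0$ and $d(a,b)\neq 1$, whence $d(a,b)=2$. This simultaneously shows that $X$ is connected, that its diameter is at most $2$, and the forward implication of (D3). The converse implication of (D3) is immediate: if $d(a,b)=2$ then in particular $d(a,b)\neq 1$, so $a\notperp b$ by (D1).

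To establish the combinatorial fact I would first invoke (L1). Since $a\notperp b$, there is a $g$ with $g\perp a$ and $\{a,b\}\c=\{a,g\}\c$. Thus $\{a,g\}$ is an orthogonal set of exactly two elements, and here the rank hypothesis enters: by Remark \ref{remarklemma}, for a space fulfilling (L1) of finite rank $m$ every maximal orthogonal subset has cardinality $m\ge 3$. Consequently $\{a,g\}$, being of size $2<m$, is not maximal and therefore extends to a maximal orthogonal subset $D$ with $|D|=m\ge 3$. Choosing any $c\in D\setminus\{a,g\}$ gives an element with $c\perp a$ and $c\perp g$, that is, $c\in\{a,g\}\c=\{a,b\}\c$; hence $c\perp a$ and $c\perp b$, exactly as required.

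The main obstacle, and the only point where the hypothesis ``rank at least $3$'' is genuinely used, is the step guaranteeing that the orthogonal pair $\{a,g\}$ can be enlarged by a third mutually orthogonal element. Merely knowing that \emph{some} orthogonal triple exists somewhere in $X$ would not suffice; what I actually need is that \emph{this particular} pair is non-maximal, and that is precisely what the equicardinality of maximal orthogonal subsets delivers. This equicardinality in turn rests on the fact that under (L1) the orthoclosed subsets form an atomistic modular ortholattice of length $m$ (Remark \ref{remarklemma}). An equivalent, purely lattice-theoretic route would be to note that $\{a\}\cc$ and $\{b\}\cc$ are distinct atoms (distinct by strong irredundancy, Lemma \ref{str-irr}), so their join has length $2$ and its orthocomplement $\{a,b\}\c=(\{a,b\}\cc)\c$ is an orthoclosed set of length $m-2\ge 1$, forcing it to contain an atom and hence a witness $c$. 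By contrast, the rank-$2$ situation collapses exactly because there $\{a,g\}$ is already maximal, which is why, as recorded in Corollary \ref{rank2Linearnotconn}, linear spaces of rank $2$ fail to be connected.
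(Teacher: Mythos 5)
Your proposal is correct and follows essentially the same route as the paper: apply (L1) to an unorthogonal pair $a\notperp b$ to obtain $g\perp a$ with $\{a,b\}\c=\{a,g\}\c$, then use the rank hypothesis to produce a common neighbour in $\{a,g\}\c=\{a,b\}\c$, giving a path of length $2$. The only difference is that you spell out (via the equicardinality of maximal orthogonal subsets from Remark~\ref{remarklemma}) the step the paper states without justification, namely that $\{a,g\}\c$ is nonempty when the rank is at least $3$.
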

\begin{proof}
 Assume that $a\not\perp b$. 
From (L1) we obtain that there is $c\in X$ such that $a\perp c$ and $ \{a,b\}^{\bot}= \{a,c\}^{\bot}$. 
Since $(X,\perp)$ has rank at least 3 and fulfills (L1) there is an element $e\in X$ such that $e\in  \{a,c\}^{\bot}$. 
Hence $e\in  \{a,b\}^{\bot}$ and $d(a,b)=2$. From (D1) we obtain the remaining implication of (D3). The 
statement then follows from the fact that the distance of any two elements of $X$ is at most 2. 
\end{proof}

\begin{definition}
By an {\it sfield}, we mean a skew field (i.e., a division ring). Let $V$ be a linear space over an sfield $K$. \rev{In accordance with Example \ref{ex:standard-example-1}, we define \mbox{$P(V) = \{ \lin x \colon x \in V\setminus \{0\} \}$}} to be the projective space associated with $V$. 

A {\it $\star$-sfield} is an sfield equipped with an involutorial antiautomorphism $^\star$. An {\it (anisotropic) Hermitian space} is a linear space $H$ over a $\star$-sfield $K$ that is equipped with an anisotropic, symmetric sesquilinear form $\herm{\cdot}{\cdot} \colon H \times H \to K$. 
\end{definition}

The following correspondence between linear orthogonality spaces and linear spaces was shown in \cite{Vet3}.

\begin{theorem} \label{thm:orthogonality-spaces-by-orthomodular-spaces}
Let $H$ be a Hermitian space of finite dimension $n$. Then $(P(H), \perp)$ is a linear orthogonality space of rank $n$.

Conversely, let $(X, \perp)$ be a linear orthogonality space of finite rank $n \geq 4$. Then there is a $\star$-sfield $K$ and an $n$-dimensional Hermitian space $H$ over $K$ such that $(X,\perp)$ is isomorphic to $(P(H), \perp)$.
\end{theorem}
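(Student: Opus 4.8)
The plan is to handle the two directions separately, reducing the converse to the coordinatisation theory of orthomodular lattices. For the forward direction, I would start from an $n$-dimensional anisotropic Hermitian space $H$ and verify directly that the relation on $P(H)$ given by $\lin x \perp \lin y$ iff $\herm{x}{y} = 0$ is symmetric and irreflexive: symmetry is immediate from the symmetry of the form, and irreflexivity is exactly anisotropy, since $\herm{x}{x} \neq 0$ for every $x \neq 0$. To compute the rank I would note that mutually orthogonal projective points correspond to orthogonal families of nonzero vectors; as the form is anisotropic such a family is linearly independent, and a maximal one is an orthogonal basis, so every maximal orthogonal subset has exactly $n$ elements. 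Linearity is then checked by a Gram--Schmidt-type argument: given $\lin e \notperp \lin f$, correcting $f$ by a suitable scalar multiple of $e$ produces a vector $g$ in the span of $e$ and $f$ with $g \perp e$ and $\operatorname{span}\{e,g\} = \operatorname{span}\{e,f\}$, whence $\{\lin e, \lin f\}\c = \{\lin e, \lin g\}\c$, giving (L1); dually, when $\lin e \perp \lin f$ the vector $e+f$ satisfies $\herm{e+f}{e} = \herm{e}{e} \neq 0$ and spans the same plane, giving (L2).

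The converse is the substantial part, and my strategy is to pass to the lattice $\mathcal C(X,\perp)$ of orthoclosed subsets and invoke the fundamental theorem of projective geometry. By Remark \ref{remarklemma}, the condition (L1) already makes $\mathcal C(X,\perp)$ an atomistic modular ortholattice of length $n$; using the characterisation that a linear orthogonality space of finite rank is precisely an irreducible, strongly irredundant Dacey space, I would upgrade this to an \emph{irreducible orthomodular} lattice, modularity supplying orthomodularity for free. Strong irredundancy (Lemma \ref{str-irr}) then lets me identify the atoms $\{x\}\cc$ bijectively with the points $x \in X$, so that $(X,\perp)$ is recovered as the set of atoms equipped with the lattice orthogonality restricted to atoms. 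The decisive external input is that an irreducible, atomistic, orthomodular lattice of finite length $n \geq 4$ is isomorphic to the lattice of closed subspaces of an $n$-dimensional anisotropic Hermitian space: the geometry of atoms has projective dimension $n-1 \geq 3$ and is therefore Desarguesian, so by the fundamental theorem of projective geometry it is coordinatised by a vector space $V$ over an sfield $K$; the orthocomplementation induces a polarity on the subspace lattice, which by the Birkhoff--von Neumann and Baer representation of polarities is given by an involutorial antiautomorphism $^\star$ of $K$ together with a symmetric sesquilinear form $\herm{\cdot}{\cdot}$ whose orthogonality reproduces $\perp$; irreflexivity forces $\herm{x}{x}\neq 0$, so the form is anisotropic. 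Setting $H=V$ and transporting everything back through the atom correspondence yields $(X,\perp)\cong (P(H),\perp)$.

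The main obstacle is exactly this coordinatisation step, and it is where the hypothesis $n \geq 4$ is indispensable. The fundamental theorem of projective geometry needs projective dimension at least $2$ (lattice length at least $3$) merely to coordinatise by an sfield, but representing the \emph{orthogonality} by a Hermitian form additionally requires the geometry to be Desarguesian and the polarity to be of the type arising from a sesquilinear form; for projective planes (rank $3$) there exist non-Desarguesian geometries and polarities induced by no sesquilinear form, which is precisely why the representation theory has to exclude rank $\leq 3$ and why the present paper treats those low ranks by separate, combinatorial means. Thus, once the lattice-theoretic preparations are in place, the converse reduces to quoting the appropriate Baer / Birkhoff--von Neumann coordinatisation, and the genuinely delicate point is to verify that the abstract polarity supplied by $\perp$ is induced by a \emph{symmetric} sesquilinear form together with an involutorial companion automorphism of $K$, rather than by some order-reversing involution of a more general kind.
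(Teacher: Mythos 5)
The paper does not actually prove Theorem~\ref{thm:orthogonality-spaces-by-orthomodular-spaces}: it is imported verbatim from \cite{Vet3} with the single sentence ``The following correspondence \dots was shown in \cite{Vet3}'', so there is no in-paper argument to measure your proposal against. Judged on its own, your sketch follows what is essentially the route taken in the cited source: the forward direction by direct verification (anisotropy giving irreflexivity, Gram--Schmidt-type corrections giving (L1) and (L2), orthogonal bases giving rank $n$), and the converse by passing to $\mathcal C(X,\perp)$, establishing that it is an irreducible, atomistic, \emph{modular} ortholattice of length $n$, recognising its atom geometry as an irreducible projective geometry of projective dimension $n-1\geq 3$, coordinatising it over an sfield, and realising the orthocomplementation as a polarity induced by a symmetric sesquilinear form via Birkhoff--von Neumann/Baer, with irreflexivity forcing anisotropy. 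Your diagnosis of why $n\geq 4$ is indispensable (Desargues, and planes admitting polarities not induced by any sesquilinear form) is exactly the right one. Two small cautions. First, at one point you invoke ``an irreducible, atomistic, orthomodular lattice of finite length $n\geq 4$'' as the hypothesis of the coordinatisation theorem; orthomodularity alone is far too weak (there are many finite irreducible atomistic orthomodular lattices of length $\geq 4$ that are not subspace lattices), and what you actually need and actually have from Remark~\ref{remarklemma} is modularity (equivalently the covering/exchange property), so keep that hypothesis explicit. Second, the identification of $X$ with the atoms of $\mathcal C(X,\perp)$ deserves one more line: strong irredundancy gives not merely injectivity of $x\mapsto\{x\}\cc$ but that each $\{x\}\cc$ is the singleton $\{x\}$, since $y\in\{x\}\cc$ forces $\{x\}\c\subseteq\{y\}\c$ and hence $x=y$; this is what makes the transport of $\perp$ back from $P(H)$ to $X$ an isomorphism of orthogonality spaces rather than just a bijection on atoms.
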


We can summarize the preceding results in this section as follows.

\begin{theorem} \label{thm:summar}
Let $(X, \perp)$ be a linear orthogonality space of finite rank $m$.  Then 
\begin{enumerate}[{\rm(i)}]
\item If $m=2$ then $X$ is either finite with even cardinality or infinite.
\item If $m\geq3$ then $X$ is infinite.
\end{enumerate}
\end{theorem}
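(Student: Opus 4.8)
The plan is to assemble the statement from the structural results established earlier in the section, treating the two clauses separately and, within clause (ii), splitting according to whether the rank equals $3$ or exceeds it.

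For clause (i) I would invoke Proposition \ref{rank2Linear} directly. It asserts that every linear orthogonality space of rank $2$ has the form ${\mathbf 2}(A,B,\varphi)$ with $A,B$ disjoint, $|A|\geq 2$, and $\varphi\colon A\to B$ a bijection. Since $\varphi$ is a bijection we have $|A|=|B|$, and because $A$ and $B$ are disjoint and exhaust $X$ we get $|X|=|A|+|B|=2|A|$. Hence if $X$ is finite its cardinality is even (indeed at least $4$), and otherwise it is infinite; this is exactly the dichotomy claimed.

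For clause (ii) with $m=3$ I would argue by contradiction. Suppose $(X,\perp)$ is a finite linear orthogonality space of rank $3$. Being linear it fulfils (L1), so Theorem \ref{open2} forces $(X,\perp)$ to be of the form ${\mathbf 3}(A,B,\varphi)$. But Proposition \ref{rank3finite} states precisely that every such space ${\mathbf 3}(A,B,\varphi)$ fails to be linear -- concretely because it possesses a central element $x$ with $\{x\}^{\perp}=X\setminus\{x\}$, which by Corollary \ref{centralnotlinear} obstructs linearity. This contradiction rules out any finite linear space of rank exactly $3$.

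The case $m\geq 4$ is the main obstacle, and here I would pass through the representation theorem. Assume $(X,\perp)$ is finite and linear of rank $m\geq 4$. By Theorem \ref{thm:orthogonality-spaces-by-orthomodular-spaces} it is isomorphic to $(P(H),\perp)$ for some $m$-dimensional Hermitian space $H$ over a $\star$-sfield $K$; finiteness of $X=P(H)$ forces $K$ to be finite. The key step is then to produce a finite linear space of rank $3$ inside $X$: choosing any $3$-dimensional subspace $W\leq H$, the restriction of the anisotropic form to $W$ is again anisotropic, so $W$ is a $3$-dimensional Hermitian space, and the forward direction of Theorem \ref{thm:orthogonality-spaces-by-orthomodular-spaces} shows that $(P(W),\perp)$ is a linear orthogonality space of rank $3$. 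Since $K$ is finite, $P(W)$ is finite, contradicting the $m=3$ case already settled. I expect the one point requiring care to be the verification that the orthogonality relation induced on $P(W)$ as a subset of $P(H)$ coincides with the one attached to $W$ as a standalone Hermitian space, so that the forward representation theorem genuinely applies; this is immediate once one observes that orthogonality on both sides is computed from the same form restricted to $W$. An alternative to the subspace reduction, if one prefers to avoid the representation theorem, is to invoke the classification of anisotropic sesquilinear forms over finite $\star$-fields, which have dimension at most $2$; this directly excludes an $m$-dimensional Hermitian space with $m\geq 3$ over a finite $K$.
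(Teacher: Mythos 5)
Your proof is correct, and for parts (i) and the rank-$3$ case of (ii) it coincides with the paper's argument: clause (i) is read off from Proposition \ref{rank2Linear}, and the case $m=3$ is settled by combining Theorem \ref{open2} with Proposition \ref{rank3finite}. The divergence is in the case $m\geq 4$. The paper, after invoking Theorem \ref{thm:orthogonality-spaces-by-orthomodular-spaces} to realise $(X,\perp)$ as $(P(H),\perp)$, concludes directly from the Eckmann--Zabey result \cite{EZ} that the $\star$-sfield $K$ must be infinite (there is no anisotropic Hermitian form in dimension $\geq 4$ over a finite field), whence $P(H)$ is infinite. You instead restrict the form to a $3$-dimensional subspace $W\leq H$, note that anisotropy and symmetry survive restriction, apply the forward direction of the representation theorem to get a finite linear orthogonality space $(P(W),\perp)$ of rank $3$, and contradict the already-established $m=3$ case. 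Both routes pass through the representation theorem, but yours replaces the external field-theoretic input by the paper's own combinatorial rank-$3$ classification (Theorem \ref{open2} rests on Greechie--Herman and Remark \ref{remarklemma}, not on the theory of forms over finite fields), which makes the $m\geq 4$ case a genuine corollary of the $m=3$ case and renders the argument more self-contained; the paper's version is shorter and makes the historical connection to \cite{EZ} explicit. Your side remark that one could alternatively cite the classification of anisotropic forms over finite fields is essentially the paper's actual route. The one point you flag as needing care --- that the orthogonality induced on $P(W)$ from $P(H)$ agrees with the intrinsic one of $W$ --- is indeed immediate, and in fact your contradiction only needs $(P(W),\perp)$ as a standalone rank-$3$ linear orthogonality space that happens to be finite, so nothing further is required there.
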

\begin{proof} (i) It follows from Proposition \ref{rank2Linear}.

(ii) If $m=3$ the statement follows from Theorem \ref{open2} and Proposition \ref{rank3finite}. Let $m\geq 4$. 
From Theorem \ref{thm:orthogonality-spaces-by-orthomodular-spaces} we know that
 $(X,\perp)$ is isomorphic to $(P(H), \perp)$ for some $\star$-sfield $K$ and an $n$-dimensional 
 Hermitian space $H$ over $K$. Since $H$ has dimension at least 4, we have by the results of  Eckmann and  Zabey \cite{EZ} that 
 $K$ is infinite. Hence also $(P(H), \perp)$ is infinite.
\end{proof}

\section*{Acknowledgment}

Research of the first and fourth author was supported {
by the Austrian Science Fund (FWF), project I~4579-N, and the Czech Science Foundation (GA\v CR), project 20-09869L, 
entitled ``The many facets of orthomodularity''.}
Research of the second author was supported by the project  ``New approaches to aggregation operators in analysis and processing
of data'', Nr.~18-06915S by Czech Grant Agency (GA\v{C}R).
Research of the third author was supported by the project ``Group Techniques and Quantum Information'', 
No.~MUNI/G/1211/2017 by Masaryk University Grant Agency (GAMU). 



\end{document}